\newcommand{\banacha}{\mathbb X}
\newcommand{\banachb}{\mathbb Y}
\newcommand{\banachc}{\mathbb Z}
\newcommand{\norm}[1]{\|#1\|}
\newcommand{\Norm}[1]{\left\|#1\right\|}
\DeclareMathOperator*{\argmin}{arg\,min}
\newtheorem{theorem}{Theorem}
\newtheorem{lemma}[theorem]{Lemma}
\newtheorem{corollary}[theorem]{Corollary}
\newtheorem{proposition}[theorem]{Proposition}
\newtheorem{remark}{Remark}
\begin{document} 
\title{ Inexact Newton's method to nonlinear functions with values in  a cone}

\author{ 
O.  P. Ferreira\thanks{IME/UFG,  CP-131, CEP 74001-970 - Goi\^ania, GO, Brazil (Email: {\tt
      orizon@ufg.br}). The author was supported in part by CAPES (Projeto  019/2011- Coopera\c c\~ao Internacional Brasil-China), CNPq Grants 471815/2012-8, 444134/2014-0 and  305158/2014-7, PRONEX--Optimization(FAPERJ/CNPq) and FAPEG/GO.} 
\and 
G. N. Silva \thanks{CCET/UFOB,  CEP 47808-021 - Barreiras, BA, Brazil (Email: {\tt  gilson.silva@ufob.edu.br}). The author was supported in part by CAPES .}    
}
\date{April 21, 2016}  
\maketitle
\begin{abstract}

The problem of finding a  solution of  nonlinear inclusion problems in Banach space  is considered in this paper.   Using convex optimization techniques  introduced by Robinson (Numer. Math., Vol. 19, 1972, pp. 341-347),   a robust convergence theorem  for inexact Newton's method is proved. As an application,   an  affine  invariant version of Kantorovich's theorem and Smale's  $\alpha$-theorem for inexact Newton's method is obtained. \\

\noindent
{\bf Keywords:} Inclusion problems, inexact Newton's method, majorant condition, semi-local convergence.

\end{abstract}
\section{Introduction}\label{sec:int}
In this paper we study the inexact Newton's method for solving the nonlinear  inclusion problem
\begin{equation} \label{eq:ipi}
  F(x) \in C, 
\end{equation}
where $F:{\Omega}\to \banachb$ is a  nonlinear continuously   differentiable function,   $\banacha$  and $\banachb$ are Banach spaces,   $\banacha$ is reflexive, $\Omega\subseteq \banacha$ an open set and  $C\subset \banachb $ a nonempty closed convex cone. The idea of solving a nonlinear  inclusion problems of the form \eqref{eq:ipi},   plays a huge role in classical analysis and its applications. For instance,  the  special case in which $C$ is the degenerate cone $\{0\}\subset \banachb$,  the  inclusion  problem  in  \eqref{eq:ipi} corresponds to a nonlinear equation.   In the case for which $\banacha=\mathbb{R}^{n}$, $\banachb=\mathbb{R}^{p+q}$ and $C=\mathbb{R}^{p}_{-}\times \{0\}$ is the product of the nonpositive orthant in $\mathbb{R}^{p}$ with the origin in  $\mathbb{R}^{q}$,  the  inclusion  problem  in  \eqref{eq:ipi} corresponds to a nonlinear system of  $p$  inequalities and  $q$ equalities,  for example see \cite{BlumSmale1998}, \cite{Dennis1996}, \cite{Daniel1973}, \cite{Deuflhard2004}, \cite{DontchevRockafellar2009}, \cite{HeJie2005}, \cite{Krantz2013}, \cite{LiNg2012} and \cite{Psenicyi1970}. 

In  order to solving \eqref{eq:ipi},     in  \cite{Robinson1972_2}  the following   Newton-type iterative method was proposed:
\begin{equation}\label{eq:nmc}
x_{k+1}= x_k + d_k, \qquad \quad d_k\in  \argmin_{d\in \banacha}\left\{\|d\|~:~ F(x_k)+F'(x_k)d \in C\right\},\qquad \quad  k=0,1, \ldots .
\end{equation}

In general, this algorithm may fail to converge and may even fail to be well defined. To ensure that the method is well defined and converges to a solution of the  nonlinear  inclusion,   S. M. Robinson,  made two important assumptions: 
\begin{itemize}
\item[{\bf H1.}] There exists   $x_0\in \banacha$ such that  $\mbox{rge}~ T_{x_0}= \banachb,$ where      $T_{x_0}: \banacha \rightrightarrows  \banachb$  is the convex process given by 
$$
T_{x_0} d:= F'(x_0)d -C, \qquad  \quad ~ d\in \banacha, 
$$
 and $\mbox{rge}~ T_{x_0}=\{ y\in \banachb ~:~ y\in T_{x_0}(x) ~ \mbox{for some}~ x \in \banacha\}$, see \cite{DontchevRockafellar2009} for additional details. 
\item[{\bf  H2.}]    $F'$ is Lipschitz continuous  with  modulo  $L$, i.e.,  $\left\| F'(x) - F'(y)\right\| \leq L \left\| x-y\right\|$,  for all $x, y,\in \banacha $. 
\end{itemize}
Under these assumptions, it was proved in \cite{Robinson1972_2},  that the  sequence $\{x_k\}$ generated by  (\ref{eq:nmc}) is well defined and converges to  $x_*$ satisfying $F(x_*) \in C$,  provided that following convergence criterion is satisfied:
$$
\|x_1-x_0\| \leq \frac{1}{2L\|T_{x_0}^{-1}\|}.
$$
The first  affine invariant version of this result was  presented in \cite{LiNg2012}.  In \cite{LiNg2013} they introduced the notion of the weak-Robinson condition for convex processes and presented an extension of the results of \cite{LiNg2012} under an $L$-average Lipschitz condition. As applications,  two special cases  were provided, namely, the convergence result of the method under Lipschitz's condition and Smale's condition.  In \cite{Ferreira2015}, under an affine majorant condition,  a robust analysis  of this  method were established. As in \cite{LiNg2012}, the analysis under Lipschitz's condition and Smale's condition are also obtained as special case, see also \cite{Alvarez2008}, \cite{DedieuPrioureMalajovich2003}. 

The inexact Newton method, for solving nonlinear equation $F(x)=0$,  was introduced in \cite{DemboEisenstat1982}  for denoting any method which, given an initial point $x_0$, generates the sequence $\{ x_k\}$ as follows:
\begin{equation} \label{eq:inm}
\|F(x_k) + F'(x_k)(x_{k+1}-x_k)\| \leq \eta_k\|F(x_k)\|, \qquad k=0,1,\ldots, 
\end{equation}
and $\{\eta_k\}$ is a sequence of forcing terms such that $0\leq \eta_k <1$;  for others variants of  this method see \cite{ArgyrosHilout2010},  \cite{DontchevRockafellar2013}, \cite{FerreiraSvaiter2012}.   In \cite{DemboEisenstat1982}  was  proved,  under suitable assumptions,  that $\{x_k\}$  is convergent to a solution with  super-linear rate.  In  \cite{Kelley2003}  numerical issues about this method are discussed. In the present paper, we extend the inexact Newton's method  (\ref{eq:inm}), for solving nonlinear inclusion,  as any method which, given an initial point $x_0,$ generates a sequence $\{x_k\}$ as follows:  
\begin{equation}\label{eq:inmc2}
 x_{k+1}={x_k}+d_k, \qquad d_k \in  \argmin_{d\in \banacha}\left\{\|d\| ~:  ~ F(x_k)+F'(x_k)d +r_k \in C \right\}, 
\end{equation}
\begin{equation}\label{eq:inmc3}
  \max_{w\in \{-r_k,~ r_k  \}} \left\|T_{x_0}^{-1}w\right\|\leq \theta \left\|T_{x_0}^{-1}[-F(x_k)]\right\|, 
\end{equation}
for  $  k=0,1, \ldots  $,    $0\leq \theta < 1$ is   a fixed suitable tolerance, and   $T_{x_0}^{-1}(y):=\left\{d\in \banacha ~:~ F'(x_0)d-y \in C\right\} $, for $y\in \banachb$.  We point out that, if $\theta =0$ then \eqref{eq:inmc2}-\eqref{eq:inmc3} reduces to extended Newton method (\ref{eq:nmc}) for solving \eqref{eq:ipi} and, in the case,   $C=\{0\}$ it reduces to affine invariant version of (\ref{eq:inm}), which was also studied in \cite{FerreiraSvaiter2012}.  

It is worth noting that  (\ref{eq:ipi}) is a particular instance  of the following  generalized equation
\begin{equation}\label{eq:geint}
F(x)+T(x)\ni 0, 
\end{equation}
when $T(x)\equiv -C$ and   $T : \banacha \rightrightarrows \banachb$ is a set valued mapping.  In  \cite{DontchevRockafellar2013} (see also \cite{Dontchev2015}), they  proposed the following    Newton-type method for solving \eqref{eq:geint}:
\begin{equation}\label{eq:gein}
(F(x_k) +F'(x_k)(x_{k+1}-x_k)+T(x_{k+1}))\cap R_{k}(x_k, x_{k+1}) \neq \varnothing, \qquad \;\;k=0,1, \ldots , 
\end{equation} 
where  $R_{k}: \banacha \times \banacha \rightrightarrows \banachb$ is a sequence of set-value mappings with closed graphs. Note that,  in the case,  when $C(x)\equiv 0$, $\theta\equiv \eta_k$ and 
$$
R_k(x_k,x_{k+1})\equiv B_{\eta_k\|F(x_k)\|}(0),
$$ 
the iteration (\ref{eq:gein})   reduces to (\ref{eq:inm}). We also remark that,  in the particular  case $T(x)\equiv-C$, the iteration (\ref{eq:gein}) has  \eqref{eq:inmc2}-\eqref{eq:inmc3}  as  a minimal norm  affine invariant version. Therefore, in some sense, our method is a particular  instance of \cite{DontchevRockafellar2013}. However, the  analysis presented in   \cite{DontchevRockafellar2013} is local, i.e.,  it is made assumption at a solution,    while in our analysis we will not assume existence of solution.  In fact,   our aim is  to prove a  robust  Kantorovich's Theorem for  \eqref{eq:inmc2}-\eqref{eq:inmc3},  under assumption  {\bf H1} and  an  affine invariant   majorant condition generalizing   {\bf H2}, which   in particular, prove  existence of solution for (\ref{eq:ipi}).  Moreover, the analysis presented, shows that the robust analysis of the inexact  Newton's method for solving nonlinear inclusion problems, under affine Lipschitz-like and affine Smale's conditions,  can be obtained as a special case of the general theory. Besides, for the degenerate cone, which the nonlinear inclusion becomes a nonlinear equation, our analysis retrieves the classical results on semi-local analysis of  inexact Newton's method; \cite{FerreiraSvaiter2012}.  Up to our knowledge, this is the first time that the inexact Newton method to solving  cone inclusion problems with a relative error tolerance  is analyzed.

The organization of the paper is as follows. In Section \ref{sec:int.1},  some notations and  basic results  used in the paper are presented. In Section \ref{lkant}, the main results are stated and  in   Section \ref{sec:PR} some properties of the majorant function are established and the main relationships  between the majorant function and the nonlinear operator used in the paper are presented. In Section \ref{sec:inpso},  the main results are proved and the applications of this results are given in Section \ref{sec:scinmer}. Some final remarks are made in Section~\ref{sec:fr}.

\subsection{Notation and auxiliary results} \label{sec:int.1}
 Let $\banacha$  be a  Banach space. The {\it open} and {\it closed ball} at $x$ with radius $\delta>0$ are denoted, respectively, by $ B(x,\delta) := \{ y\in \banacha ~:~ \|x-y\|<\delta \}$ and $B[x,\delta] := \{ y\in \banacha  ~:~\|x-y\|\leqslant \delta
\}.$ A  set valued mapping $T: \banacha \rightrightarrows  \banachb $ is called {\it sublinear} or   {\it convex process}    when its graph is a convex cone, i.e., 
\begin{equation} \label{eq:dsblm}
0\in T(0), \qquad T(\lambda x)=\lambda T(x),   \qquad  \lambda>0, \qquad T(x+x') \supseteq T(x) + T(x'),  ~\qquad  ~ x, x'\in \banacha,
\end{equation}
 (sublinear mapping has been extensively studied in \cite{DontchevRockafellar2009},  \cite{Robinson1972_1},  \cite{Rockafellar1967} and  \cite{Rockafellar1970}).  The {\it domain} and  {\it range}   of  a sublinear mapping $T$ are defined, respectively, by $  \mbox{dom\,}T:=\{d\in \banacha~:~Td \neq \varnothing \}, $ and   $ \mbox{rge\,}T:=\{y\in \banachb ~:~ y\in T(x) ~\mbox{for some} ~x\in\banacha \}.$ The {\it norm} (or inner norm as is called in \cite{DontchevRockafellar2009})   of  a sublinear mapping $T$ is defined  by
 \begin{equation} \label{eq;dn}
 \|T\|:=\sup \; \{ \|T d\|~: ~d\in  \mbox{dom\,}T, \; \|d\| \leqslant 1 \},
 \end{equation}
where $ \|T d\|:=\inf \{\|v\|~: ~v\in T d  \}$ for  $Td \neq \varnothing $. We use the convention $\|Td \|=+\infty$ for $Td = \varnothing $,   it will be also convenient to use the convention  $Td+ \varnothing=\varnothing$ for all $d\in \banacha$. Let $S, T: \banacha \rightrightarrows  \banachb $ and $U: \banachb \rightrightarrows  \banachc$  be sublinear mappings.  The   scalar {\it multiplication}, {\it addition}  and {\it composition} of sublinear mappings are sublinear mappings defined, respectively,  by
$(\alpha S)(x):=\alpha S(x),$  $(S+T)(x):= S(x) + T(x),$ and  $UT(x):=\cup \left\{ U(y)~: ~ y\in T(x) \right \},$ for all $x\in \banacha$ and  $\alpha>0$  and the following norm properties there hold $ \|\alpha S\|=|\alpha| \|S\|,$ $\|S+T\|\leqslant \|S\|+\|T\| $ and $\|UT\|\leqslant \|U\| \|T\|.$
\begin{remark} \label{r:pn}
Note that definition of the norm in \eqref{eq;dn} implies that  if $ \mbox{dom\,}T=\banacha$ and $A$ is a linear mapping from $\banachc$ to $\banacha$ then 
$\|T(-A)\|=\|TA\|$.
\end{remark}
Let $\Omega\subseteq \banacha$ be an open set and $F:{\Omega}\to \banachb$  a continuously Fr\'echet differentiable function.   The   linear map $F'(x):\banacha \to \banachb$ denotes the  Fr\'echet derivative of $F:{\Omega}\to \banachb$ at $x\in \Omega$.  Let  $C \subset \banachb $ be  a  nonempty closed convex cone,  $z\in\Omega$ and   $T_z: \banacha \rightrightarrows  \banachb $ a  mapping  defined  as
\begin{equation} \label{ro2}
T_{z}d:=F'(z)d-C.
\end{equation}
It is well known that the mappings  $T_{z}$  and  $T^{-1}_{z}$ are sublinear  with closed graph, $ \mbox{dom\,}T_z=X$, $\|T_z\|<+\infty$ and, moreover,  $\mbox{rge\,}T_z=Y$ if and only if $ \|T^{-1}_{z}\|<+\infty$ (see Lemma 3 above and  Corollary 4A.7, Corollary 5C.2 and Example 5C.4 of \cite{DontchevRockafellar2009} ). Note that 
\begin{equation} \label{ro3}
 T^{-1}_zy:=\{d\in \banacha~:~ F'(z)d-y\in C\}, \qquad  \quad  z\in  \Omega, ~ ~ y\in \banachb.
\end{equation}
\begin{lemma} \label{l:incltr}
There holds $ T_{z}^{-1}F'(v)T_{v}^{-1}w \subseteq T_{z}^{-1}w,$  for all $ v, z\in  \Omega, ~ ~ w\in \banachb.  $
As a consequence, 
$$
 \left\|T_{z}^{-1}\left[ F'(y)-F'(x)\right]\right\|\leq \left\|  T_{z}^{-1}F'(v)T_{v}^{-1}\left[ F'(y)-F'(x)\right] \right\|, \qquad  v, x, y, z\in  \Omega.
$$
\end{lemma}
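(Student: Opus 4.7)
The plan is to verify the inclusion by unwinding definitions and then to deduce the norm inequality from the fact that shrinking the set over which one takes an infimum can only raise that infimum.

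For the inclusion, I would fix $w \in \banachb$ and take an arbitrary $d' \in T_z^{-1}F'(v)T_v^{-1}w$. By the definition of composition of sublinear mappings given in the excerpt, this means there exists $d \in T_v^{-1}w$ such that $d' \in T_z^{-1}F'(v)d$. Using \eqref{ro3} (equivalently \eqref{ro2}), the first membership says $F'(v)d - w \in C$, and the second says $F'(z)d' - F'(v)d \in C$. Adding these two elements of $C$, and using that $C$ is a convex cone (so closed under addition: $c_1+c_2 = 2\bigl(\tfrac12 c_1+\tfrac12 c_2\bigr) \in C$), one obtains
\[
F'(z)d' - w = \bigl(F'(z)d' - F'(v)d\bigr) + \bigl(F'(v)d - w\bigr) \in C,
\]
which is precisely the statement $d' \in T_z^{-1}w$. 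This gives the claimed containment.

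For the consequence, I would apply the inclusion with the particular choice $w = [F'(y)-F'(x)]h$, letting $h$ vary over $\banacha$. For each such $h$, the above gives
\[
T_z^{-1}F'(v)T_v^{-1}[F'(y)-F'(x)]h \subseteq T_z^{-1}[F'(y)-F'(x)]h.
\]
Now $\|Sh\| = \inf\{\|u\|:u\in Sh\}$, so a containment between sets reverses under this infimum, yielding $\|T_z^{-1}F'(v)T_v^{-1}[F'(y)-F'(x)]h\| \geq \|T_z^{-1}[F'(y)-F'(x)]h\|$. Taking the supremum over $\|h\|\leq 1$ in the definition \eqref{eq;dn} preserves the inequality and produces exactly the asserted bound.

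The only genuinely content-bearing step is the first one, and the subtle point there is to notice that convexity of the cone $C$ (together with positive homogeneity) forces $C$ to be closed under addition; without this the two elements $F'(v)d-w$ and $F'(z)d'-F'(v)d$ could not be combined to land back in $C$. Once that is in place, everything else is bookkeeping with the compositional and norm definitions already collected in the preliminaries.
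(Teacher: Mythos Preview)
Your argument is correct and is the natural one: unwind \eqref{ro3} to get $F'(v)d-w\in C$ and $F'(z)d'-F'(v)d\in C$, add using that a convex cone is closed under addition, and conclude $d'\in T_z^{-1}w$; then pass from the pointwise set containment to the norm inequality via the infimum--supremum structure of \eqref{eq;dn}. Note, however, that the paper does not actually prove this lemma here---it simply refers the reader to \cite{Ferreira2015}---so there is no in-text argument to compare against. Your direct verification is exactly what one expects the cited proof to contain, and it is self-contained from the preliminaries already set up in Section~\ref{sec:int.1}.

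One minor remark on the norm step: the passage from $S_2h\subseteq S_1h$ to $\|S_1\|\leq\|S_2\|$ is cleanest if you invoke the convention $\|Td\|=+\infty$ when $Td=\varnothing$, so that $\|S_1h\|\leq\|S_2h\|$ holds for \emph{every} $h$ (not only for $h$ in the domain of $S_2$); then the supremum over $\|h\|\leq 1$ is immediate without having to worry about whether the two domains coincide. In all applications of the lemma in this paper one has $v=x_0$ with $\mbox{rge\,}T_{x_0}=\banachb$, so the domain is everything anyway and the point is moot, but stating it this way makes your argument airtight in the generality claimed.
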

\begin{proof}
See  \cite{Ferreira2015}.
\end{proof}
\section{Inexact Newton's  method } \label{lkant}
Our goal is to state and prove a robust semi-local affine invariant theorem for inexact Newton's method to solve nonlinear inclusion  of the form \eqref{eq:ipi},   for state this theorem we need some definitions. 

Let $\banacha$, $\banachb$ be Banach spaces,  $\banacha$ reflexive, $\Omega\subseteq \banacha$ an open set,    $F:{\Omega}\to \banachb$  a continuously Fr\'echet differentiable function. The  function $F$ satisfies the {\it Robinson's  Condition} at $x_0\in \Omega$ if 
$$
\mbox{rge\,}T_{x_0}=\banachb, 
$$
where $T_{x_0}: \banacha \rightrightarrows  \banachb $  is a sublinear   mapping  as defined in \eqref{ro2}. Let   $R>0$ a scalar constant. A  continuously differentiable  function $f:[0,R)\to \mathbb{R}$ is a {\it majorant function}   at a point  $x_0 \in \Omega$ for     $F$     if 
  \begin{equation}\label{eq:MCAI}
   B(x_0,R)\subseteq \Omega, \qquad \qquad  \left\|T_{x_0}^{-1}\left[F'(y)-F'(x)\right]\right\| \leqslant  f'(\|x-x_0\| + \|y-x\|)-f'(\|x-x_0\|),
  \end{equation}
  for all $x,y\in B(x_0,R)$ such that  $\|x-x_0\|+\|y-x\|< R$ and satisfies the  following conditions:
  \begin{itemize}
  \item[{\bf h1)}]  $f(0)>0$,   $f'(0)=-1$;
  \item[{\bf h2)}]  $f'$ is convex and strictly increasing;
  \item[{\bf h3)}]  $f(t)=0$ for some $t\in (0,R)$.
  \end{itemize}
We also need of the following  condition on the majorant condition $f$  which will be considered to hold
only when explicitly stated.
  \begin{itemize}
  \item[{\bf h4)}]  $f(t)<0$ for some $t\in (0,R)$.
  \end{itemize}
Note that the condition {\bf h4} implies the condition {\bf h3}.  

The  sequence $\{z_k\}$ generated by {\it inexact Newton's method} for solving   the inclusion $ F(x)\in C$
   with starting point  $z_0$ and residual relative error tolerance $\theta$  is defined by:  $ z_{k+1}:={z_k}+d_k,$
 $$
 d_k \in  \argmin_{d\in \banacha}\left\{\|d\| ~:  ~ F(z_k)+F'(z_k)d +r_k \in C \right\},  \qquad  \max_{w\in \{-r_k,~ r_k  \}} \left\|T_{x_0}^{-1}w\right\|\leq \theta \left\|T_{x_0}^{-1}[-F(z_k)]\right\|, 
 $$
 for  $k=0,1, \ldots$.  The statement of the our  main theorem is:
  \begin{theorem}\label{th:knt1}
Let   $C\subset \banachb $ a nonempty closed convex cone, $R>0$.   Suppose that $x_0 \in \Omega$, $F$ satisfies  the Robinson's condition at  $x_0$,    $f$ is a majorant function for $F$ at  $x_0$ and
\begin{equation} \label{KH}
   \left \|T_{x_0}^{-1}[-F(x_0)]\right\|\leqslant f(0)\,.
\end{equation}
Let $ \beta:=\sup\{ -f(t) ~:~ t\in[0,R)  \}$. Take $0\leq \rho<\beta/2$ and define the constants
  \begin{equation} \label{eq:dktt}
  \kappa_\rho:=\sup_{\rho <t<R}\frac{-(f(t)+2\rho)}{|f'(\rho)|\,(t-\rho)},\qquad  \lambda_\rho:=\sup \{t\in [\rho,R): \kappa_\rho+f'(t)<0\},
  \qquad  \tilde \theta_\rho:=\frac{\kappa_\rho}{2-\kappa_\rho}.
  \end{equation}
  Then for any $\theta\in [0,\tilde \theta_\rho]$ and $z_0\in B(x_0,\rho)$,  the sequence $\{z_k\}$,  is well defined, for any particular choice of each $d_k$, 
	\begin{align}\label{FC}
	\|T_{z_0}^{-1}[-F(z_k)]\| \leq \left(\frac{1+\theta^2}{2}\right)^k\left[f(0) +2\rho\right],
	\end{align}
$\{z_k\}$ is contained in $B(z_0, \lambda_\rho )$ and converges to a point $x_*\in B[x_0, \lambda_\rho]$  such that  $ F(x_*)\in C$. Moreover, if
  \begin{itemize}
  \item[{\bf h5)}] $\lambda_\rho<R-\rho$,
  \end{itemize}
 then the sequence $\{z_k \}$ satisfies, for $k= 0,1, \ldots \,$, 
\begin{equation} \label{eq:slc}
\|z_k-z_{k+1}\|\leq \frac{1+\theta}{1-\theta}\left[ \frac{1+\theta}{2}\frac{D^{-}f'(\lambda_\rho +\rho)}{|f'(\lambda_\rho+\rho)|}\|z_k-z_{k-1}\|
+\theta\,\frac{2|f'(\rho)|+f'(\lambda_\rho+\rho)}{|f'(\lambda_\rho+\rho)|}\right]\|z_k-z_{k-1}\| .
\end{equation}
  If, additionally, $0\leq \theta <[-2(\kappa_{\rho}+1)+\sqrt{4(\kappa_{\rho}+1)^2+\kappa_{\rho}(4+\kappa_{\rho})}]\big/[4+\kappa_{\rho}]$ then  $\{z_k \}$ converges $Q$-linearly as  follows
 \begin{equation} \label{eq:lc}
	\limsup_{k \to \infty}\frac{\norm{x_*-z_{k+1}}}{\norm{x_*-z_k}}\leq \frac{1+\theta}{1-\theta}\left[ \frac{1+\theta}{2} +\frac{2\theta}{\kappa_{\rho}}\right], \qquad k= 0,1, \ldots \,.
\end{equation}
\end{theorem}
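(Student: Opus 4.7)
The argument follows the majorant paradigm: construct a reference one-dimensional sequence $\{t_k\}$ driven by an inexact-Newton iteration on $f$, and use \eqref{eq:MCAI} together with the residual control \eqref{eq:inmc3} to bound $\|z_{k+1}-z_k\|$ and $\|T_{z_0}^{-1}[-F(z_k)]\|$ by the corresponding scalar quantities. Starting from $t_0=\rho$, the scalar iteration will have the shape $t_{k+1}-t_k=-(f(t_k)+2\rho_k)/f'(t_k)$ with residuals $\rho_k$ decaying by the factor $(1+\theta^2)/2$; hypotheses {\bf h1}--{\bf h3} together with $\rho<\beta/2$ imply that $\{t_k\}$ is strictly increasing, bounded above by the smallest root $t_*$ of $f(t)+2\rho=0$, and the definition of $\lambda_\rho$ in \eqref{eq:dktt} gives $t_*\leq\lambda_\rho+\rho$.

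\textbf{Core induction.} The heart of the proof is a joint induction showing simultaneously, for each $k$: $z_k\in B(x_0,t_k)$; $T_{z_k}$ satisfies Robinson's condition (so $d_k$ is well defined); $\|z_{k+1}-z_k\|\leq t_{k+1}-t_k$; and the relative-residual bound \eqref{FC}. Surjectivity of $T_{z_k}$ with a quantitative inverse estimate comes from a Banach-type perturbation for convex processes: applying Lemma~\ref{l:incltr} with $z=z_0$, $v=x_0$ gives $\|T_{z_0}^{-1}w\|\leq \|T_{z_0}^{-1}F'(x_0)\|\,\|T_{x_0}^{-1}w\|$, and combining this with \eqref{eq:MCAI} at $(x,y)=(x_0,z_k)$ bounds $\|T_{z_0}^{-1}F'(x_0)\|$ by $1/|f'(\|z_k-x_0\|)|$ as long as $\|z_k-x_0\|<t_*$. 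The minimum-norm step then obeys
\begin{equation*}
\|d_k\|=\bigl\|T_{z_k}^{-1}[-F(z_k)-r_k]\bigr\|\leq \frac{(1+\theta)\,\|T_{x_0}^{-1}[-F(z_k)]\|}{|f'(t_k)|},
\end{equation*}
matching $t_{k+1}-t_k$. To propagate \eqref{FC}, write
\begin{equation*}
-F(z_{k+1})=-\bigl[F(z_k)+F'(z_k)d_k+r_k\bigr]+r_k-\int_0^1\!\bigl[F'(z_k+sd_k)-F'(z_k)\bigr]d_k\,ds,
\end{equation*}
observe that the first bracket lies in $C$ so $T_{z_0}^{-1}$ of its negative has inner norm $0$, use subadditivity of the inner norm for a sublinear inverse to split the remaining terms, control the $r_k$-piece by \eqref{eq:inmc3} (again converting via Lemma~\ref{l:incltr}), and bound the integral via \eqref{eq:MCAI}. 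The scalar identity $f(t_{k+1})-f(t_k)-f'(t_k)(t_{k+1}-t_k)=\int_0^{t_{k+1}-t_k}(f'(t_k+s)-f'(t_k))\,ds$ makes the two bounds line up to give exactly $(1+\theta^2)/2$ times the previous estimate.

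\textbf{Convergence, solution, and rates.} Since $t_k\nearrow t_\infty\leq\lambda_\rho+\rho$, telescoping $\sum_k\|z_{k+1}-z_k\|\leq t_\infty-\rho\leq \lambda_\rho$ shows $\{z_k\}\subset B(z_0,\lambda_\rho)$ is Cauchy, with limit $x_*\in B[x_0,\lambda_\rho+\rho]$ and $x_*\in B[z_0,\lambda_\rho]$. Passing to the limit in $F(z_k)+F'(z_k)d_k+r_k\in C$, using $\|d_k\|\to 0$, $\|r_k\|\to 0$, and the closedness of $C$, yields $F(x_*)\in C$. For \eqref{eq:slc}, the one-step estimate is sharpened by bounding the integral remainder via the one-sided derivative $D^-f'$ at $\lambda_\rho+\rho$ (which lies in $[0,R)$ under {\bf h5}); the two summands in \eqref{eq:slc} correspond, respectively, to the quadratic Newton-style term and the linear residual-perturbation term. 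The $Q$-linear rate \eqref{eq:lc} follows by dividing \eqref{eq:slc} by $\|z_k-z_{k-1}\|$, passing to the limit, and using $|f'(\lambda_\rho+\rho)|\geq\kappa_\rho$, which is the defining inequality of $\lambda_\rho$; the stricter quadratic bound on $\theta$ is precisely what forces the resulting contraction constant below one. The main obstacle is the simultaneous bookkeeping of three convex processes $T_{x_0}$, $T_{z_0}$, $T_{z_k}$: the residual tolerance is measured in the $T_{x_0}$-norm, the bound \eqref{FC} uses the $T_{z_0}$-norm, and the step is a $T_{z_k}$-minimum-norm element, and these must be reconciled through Lemma~\ref{l:incltr} without losing the sharp factor $(1+\theta^2)/2$ needed to close the induction.
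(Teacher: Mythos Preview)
Your overall architecture---majorant sequence plus joint induction---is the same as the paper's, but two steps do not close as written, and the paper organizes the general-$\rho$ case differently from what you sketch.

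\textbf{The scalar iteration is mis-specified.} You write $t_{k+1}-t_k=-(f(t_k)+2\rho_k)/f'(t_k)$ with ``residuals $\rho_k$ decaying by the factor $(1+\theta^2)/2$'', but this does not match the step bound you derive just below, which carries the factor $(1+\theta)$. More importantly, the accumulated error term must be \emph{non-decreasing}, not decaying. The paper uses a two-dimensional iteration
\[
n_\theta(t,\varepsilon)=\Bigl(t-(1+\theta)\frac{f(t)+\varepsilon}{f'(t)},\ \varepsilon+2\theta(f(t)+\varepsilon)\Bigr),
\]
in which $\varepsilon_k$ increases from $0$ while the sum $f(t_k)+\varepsilon_k$ decays by $(1+\theta^2)/2$ (Lemma~\ref{lm:id.t}). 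The induction is then packaged as $N_{F,C,\theta}(K(t,\varepsilon))\subset K(n_\theta(t,\varepsilon))$ on the level set $K(t,\varepsilon)=\{x:\|x-x_0\|\le t,\ \|T_{x_0}^{-1}[-F(x)]\|\le f(t)+\varepsilon\}$. Without the second coordinate you cannot simultaneously control the step length and propagate the residual bound with the sharp factor.

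\textbf{The limit argument for $F(x_*)\in C$ fails.} You pass to the limit in $F(z_k)+F'(z_k)d_k+r_k\in C$ using ``$\|r_k\|\to 0$'', but the residual condition \eqref{eq:inmc3} only controls the \emph{inner} norm $\|T_{x_0}^{-1}(\pm r_k)\|$: for any $r_k\in -C$ one has $0\in T_{x_0}^{-1}r_k$, so $\|T_{x_0}^{-1}r_k\|=0$ says nothing about $\|r_k\|$. The paper instead uses $\|T_{x_0}^{-1}[-F(z_k)]\|\to 0$ to pick $d_k\to 0$ with $F'(x_0)d_k+F(z_k)\in C$ and passes to the limit in that inclusion.

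\textbf{Handling general $\rho$.} Rather than carrying three base points $x_0,z_0,z_k$ through the induction, the paper first proves the case $\rho=0$, $z_0=x_0$ via Theorem~\ref{th:gc.ki.r}, and for general $\rho$ introduces the shifted majorant $g(t)=-[f(t+\rho)+2\rho]/f'(\rho)$ on $[0,R-\rho)$, checks $\|T_{z_0}^{-1}[-F(z_0)]\|\le g(0)$ and that $g$ majorizes $F$ at $z_0$ (Proposition~\ref{pr:ar2}), and reapplies the $\rho=0$ case at $z_0$. The constants in \eqref{eq:dktt} are exactly $\kappa$, $\lambda$, $\tilde\theta$ for $g$, and \eqref{eq:slc}--\eqref{eq:lc} follow from $g'(t)=f'(t+\rho)/|f'(\rho)|$.
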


\begin{remark}
In Theorem~\ref{th:knt1} if $\theta=0$ we obtain the  exact Newton method as in \cite{Ferreira2015} and
its convergence properties. Now, taking $\theta=\theta_k$  in each iteration
and letting $\theta_k$ goes to zero as $k$ goes to infinity, inequality
\eqref{eq:slc}  implies that the  sequence $\{z_k\}$ converges to the solution of \eqref{eq:ipi} with asymptotic  superlinear rate. If $C=\{0\}$ we obtain the inexact 
Newton method as in \cite{FerreiraSvaiter2012} and its convergence properties are similar.
\end{remark}

Henceforth we assume that the assumption on  Theorem~\ref{th:knt1} holds, except   {\bf h5} which will be considered to hold
only when explicitly stated.

\subsection{Preliminary results} \label{sec:PR}
We will first prove Theorem~\ref{th:knt1} for the case $\rho=0$ and $z_0=x_0$. 
In order to  simplify the notation in the case $\rho=0$, we will use
$\kappa$, $\lambda$ and $\theta$ instead of $\kappa_0$, $\lambda_0$ and $\tilde\theta_0$ respectively:
\begin{equation} \label{eq:dktt.0}
  \kappa:=\sup_{0<t<R}\frac{-f(t)}{t},\qquad   \quad  \lambda:=\sup \{t\in [0,R): \kappa+f'(t)<0\},  \qquad \quad   \tilde \theta:=\frac{\kappa}{2-\kappa}.
\end{equation}
    
\subsubsection{The majorant function}
In this section we will prove the main results about the majorant function.
Define 
$$ 
 t_*:=\min f^{-1}(\{0\}),   \qquad  \qquad  \bar{t}:=\sup \left\{t\in [0,R): f'(t)<0 \right\}.
$$
Then we have the following remark about the above constants which was prove in  \cite[Proposition 2.4]{FerreiraSvaiter2012}:
\begin{remark} \label{pr:new}
 For $\kappa,\lambda,\theta$ as in \eqref{eq:dktt.0} it holds that $0<\kappa<1$, $0<\theta<1$ and  $t_*<\lambda\leq\bar t.$ Moreover, $f'(t)+\kappa<0$, for $t\in[0,\lambda)$ and $\inf_{0\leq t<R} ( f(t)+\kappa t)=\lim_{t\to\lambda_{-}} ( f(t)+\kappa t)=0$.
\end{remark}
The following proposition was proved in     \cite[Propositions 2.3 and 5.2]{FerreiraSvaiter2012} and  \cite[Proposition 3]{FerreiraSvaiter2009}.
\begin{proposition} \label{pr:maj.f}
  The majorant function $f$  has a smallest root $t_*\in  (0,R)$, is strictly convex and $ f(t)>0,$ $f'(t)<0$ and $ t<t-f(t)/f'(t)< t_*, $ for all $ t\in [0,t^*) .$
Moreover, $f'(t_*)\leqslant 0$ and $   f'(t_*)<0$  if,  and only if,   there exists $t\in (t_*,R)$ such that $f(t)< 0$.  If, additionally, $f$ satisfies  {\bf h4} then $f'(t)<0$ for any $t\in [0,\bar t)$,  $0< t_* < \bar t\leq R$,  $ \beta=-\lim_{t\to \bar t_{-}} f(t),$ $0< \beta <\bar t$ and if  $0\leq \rho<\beta/2$  then $\rho<\bar t/2 <\bar t$ and  $f'(\rho)<0$.
\end{proposition}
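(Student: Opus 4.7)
The plan is to exploit the rigidity of a strictly convex, continuously differentiable function $f$ with $f(0)>0$ and $f'(0)=-1$ that is forced by h3 to touch (or cross) zero. First I would record the consequence of h2 that will do most of the work: since $f'$ is strictly increasing, $f$ is strictly convex, and the tangent-line inequality is strict, i.e.\ $f(s) > f(t) + f'(t)(s-t)$ whenever $s \neq t$. This single strict inequality is the main lever for the entire proposition.

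For the existence and first properties of $t_*$: continuity together with $f(0)>0$, h3, and the intermediate value theorem produce a smallest zero $t_*\in(0,R)$. To show $f'(t)<0$ on $[0,t_*)$, I would argue by contradiction: if $f'(\tau)\geq 0$ for some $\tau\in[0,t_*)$, then $f'>0$ on $(\tau,R)$ by h2, so $f$ would be strictly increasing past $\tau$, forcing $f(t_*)\geq f(\tau)>0$, contradicting $f(t_*)=0$. The Newton bounds now fall out cheaply: writing $N(t):=t-f(t)/f'(t)$, positivity of $N(t)-t$ is immediate from $f(t)>0$ and $f'(t)<0$, and evaluating the strict tangent inequality at the pair $(t,t_*)$ gives $0=f(t_*)>f(t)+f'(t)(t_*-t)$, which upon dividing by $f'(t)<0$ is exactly $N(t)<t_*$.

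For the dichotomy on $f'(t_*)$: strict monotonicity of $f'$ together with $f'<0$ on $[0,t_*)$ gives $f'(t_*)\leq 0$ by continuity. If $f'(t_*)<0$, then $f$ is strictly decreasing in a right neighborhood of $t_*$, yielding some $t\in(t_*,R)$ with $f(t)<0$. Conversely, if $f'(t_*)=0$, strict convexity gives $f(t)>f(t_*)+0=0$ for every $t\neq t_*$, so no such $t$ can exist.

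Finally, assume h4. The previous equivalence gives $f'(t_*)<0$, so $\bar t>t_*$, and the definition of $\bar t$ plus strict monotonicity of $f'$ make $f'<0$ on $[0,\bar t)$; hence $f$ is strictly decreasing there, and non-decreasing past $\bar t$, so the supremum defining $\beta$ is attained in the limit $t\to\bar t_-$ and equals $-\lim_{t\to\bar t_-}f(t)$. Strict positivity $\beta>0$ is clear because $f(t)<0$ just past $t_*$. For the sharper $\beta<\bar t$ I would use the fundamental theorem of calculus together with $f'(0)=-1$: for $t\in(0,\bar t)$,
\[
-f(t)=-f(0)+\int_0^t(-f'(s))\,ds<-f(0)+t,
\]
since $-f'(s)<-f'(0)=1$ on $(0,\bar t)$ by strict monotonicity of $f'$; letting $t\to\bar t_-$ and using $f(0)>0$ gives $\beta\leq \bar t-f(0)<\bar t$. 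The consequences for $0\leq\rho<\beta/2$ are then immediate: $\rho<\beta/2<\bar t/2$ and $f'(\rho)<0$ since $\rho<\bar t$. The one step that is genuinely delicate, and that I expect to be the main obstacle, is the estimate $\beta<\bar t$, which crucially uses the normalization $f'(0)=-1$ rather than any softer bound on $f'$.
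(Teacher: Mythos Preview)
Your proof is correct and self-contained. The paper, by contrast, does not prove this proposition in the text at all: it simply cites Propositions~2.3 and~5.2 of \cite{FerreiraSvaiter2012} and Proposition~3 of \cite{FerreiraSvaiter2009}, so there is no in-paper argument to compare against. Your direct approach---using the strict tangent-line inequality from strict convexity as the single workhorse, especially for the Newton bound $N(t)<t_*$ and the dichotomy on $f'(t_*)$---is the natural one, and your handling of the estimate $\beta<\bar t$ via the integral bound $-f(t)< t-f(0)$ (crucially using the normalization $f'(0)=-1$) is exactly the right idea. The only minor caveat is that the step ``letting $t\to\bar t_-$ gives $\beta\leq\bar t-f(0)$'' tacitly assumes $\bar t<\infty$; this is harmless here since $\bar t\leq R$ and the applications downstream (Lipschitz and Smale majorants) all have finite $R$, but it is worth a one-line remark if you want the argument to stand alone.
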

Take $0\leq \theta$ and $0\leq \varepsilon$. We will need of  the  following auxiliary mapping, which is associated to the inexact  newton iteration applied to the majorant function,     $n_{\theta}: [0,\bar t\,)\times [0,\infty) \to \mathbb{R}\times
\mathbb{R}$,
\begin{equation} \label{eq:nftheta}
  \displaystyle n_{\theta}(t,\varepsilon):
  =\left(t-(1+\theta)\frac{f(t)+\varepsilon}{f'(t)},\;\varepsilon+
 2\theta(f(t)+\varepsilon)\right), 
\end{equation}
The following auxiliary set will be  important for establishes the convergence of the inexact newton sequence  associated to the majorant function 
\begin{equation} \label{eq:dt} 
{\mathcal{A}}:=\left\{(t,\varepsilon)\in\mathbb{R}\times\mathbb{R}~:~0\leq t<\lambda, ~ 0\leq\varepsilon\leq \kappa t, ~ 0<f(t)+\varepsilon \right\}.
\end{equation}
The following lemma was proved in   \cite[Lemma 4.2]{FerreiraSvaiter2012}.
\begin{lemma} \label{lm:id.t}
  If $0\leq \theta\leq \tilde \theta$, $(t,\varepsilon)\in {\mathcal{A}}$   and $(t_+,\varepsilon_+):=n_{\theta}(t,\varepsilon)$, that is, $t_+:=t-(1+\theta)(f(t)+\varepsilon)/f'(t)$ and $\varepsilon_+:=\varepsilon+2\theta (f(t)+\varepsilon)$,  then $n_\theta(t,\varepsilon)\in {\mathcal{A}}$,   $t<t_+$ and $ \varepsilon \leq \varepsilon_+$. Moreover, 
  $f(t_+)+\varepsilon_+<[(1+\theta^2)/2](f(t)+\varepsilon).$
\end{lemma}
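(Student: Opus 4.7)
The plan is to verify each of the four claims in turn by combining Taylor's identity with convexity of $f'$ (hypothesis (h2)) and the sharp definitions in \eqref{eq:dktt.0}. The monotonicities $t<t_+$ and $\varepsilon\leq\varepsilon_+$ are immediate: since $(t,\varepsilon)\in\mathcal{A}$ gives $t<\lambda\leq\bar t$, Remark~\ref{pr:new} yields $f'(t)<0$, so $t_+-t=-(1+\theta)(f(t)+\varepsilon)/f'(t)>0$ and $\varepsilon_+-\varepsilon=2\theta(f(t)+\varepsilon)\geq 0$.

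For the contraction estimate on $f(t_+)+\varepsilon_+$, I would start from Taylor's identity
\[
f(t_+)=f(t)+f'(t)(t_+-t)+\int_t^{t_+}[f'(s)-f'(t)]\,ds
\]
and substitute $f'(t)(t_+-t)=-(1+\theta)(f(t)+\varepsilon)$ and $\varepsilon_+=\varepsilon+2\theta(f(t)+\varepsilon)$. The linear pieces telescope neatly, leaving $f(t_+)+\varepsilon_+=\theta(f(t)+\varepsilon)+\int_t^{t_+}[f'(s)-f'(t)]\,ds$. Convexity of $f'$ bounds the integral by $\tfrac{1}{2}(t_+-t)(f'(t_+)-f'(t))$, and reinserting the formula for $t_+-t$ rewrites this upper bound as $(f(t)+\varepsilon)\bigl[\theta+\tfrac{1+\theta}{2}(1-f'(t_+)/f'(t))\bigr]$. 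The remaining algebraic step shows that the bracket is at most $(1+\theta^2)/2$, which is equivalent to $(1+\theta)(1-f'(t_+)/f'(t))\leq(1-\theta)^2$; I would verify this using $|f'(t_+)|\geq\kappa$ on $[0,\lambda]$ (from Remark~\ref{pr:new}) together with $\theta\leq\tilde\theta$.

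Closing $(t_+,\varepsilon_+)\in\mathcal{A}$ then requires three checks. For $t_+<\lambda$ I would exploit convexity of $\psi(s):=f(s)+\kappa s$, which vanishes at $\lambda$ by Remark~\ref{pr:new}, to obtain the supporting inequality $\psi(t)\leq(|f'(t)|-\kappa)(\lambda-t)$; combined with $|f'(t)|\leq|f'(0)|=1$ (from $f'(0)=-1$ and $f'$ increasing) and $\theta\leq\tilde\theta\leq\kappa/(1-\kappa)$, this produces $t_+<\lambda$ (strictly, using strict convexity of $f$ from Proposition~\ref{pr:maj.f}). For $\varepsilon_+\leq\kappa t_+$ I would substitute the definitions, use $\varepsilon\leq\kappa t$ to cancel those terms, and reduce the surviving inequality to $\kappa(1+\theta)\geq 2\theta|f'(t)|$; since $|f'(t)|\leq 1$, this is exactly $\theta\leq\kappa/(2-\kappa)=\tilde\theta$. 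Finally $f(t_+)+\varepsilon_+>0$ is immediate from the contraction bound and positivity of $f(t)+\varepsilon$.

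The main obstacle is the tight algebraic interlock between $\tilde\theta$, $\kappa$, and the contraction factor $(1+\theta^2)/2$: the value $\tilde\theta=\kappa/(2-\kappa)$ is precisely the threshold at which both the invariance of $\mathcal{A}$ under $n_\theta$ and the claimed contraction coefficient come into balance, so the whole proof amounts to carefully tracking how each hypothesis on $(t,\varepsilon,\theta,f)$ enters each of the two pinch-points above.
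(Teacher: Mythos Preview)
Your verifications of $t<t_+$, $\varepsilon\leq\varepsilon_+$, $t_+<\lambda$, and $\varepsilon_+\leq\kappa t_+$ are all correct. The gap is in the contraction step. You reduce correctly to
\[
(1+\theta)\Bigl(1-\dfrac{f'(t_+)}{f'(t)}\Bigr)\leq(1-\theta)^2,
\]
but the only information you invoke is $|f'(t_+)|\geq\kappa$ (and implicitly $|f'(t)|\leq 1$), which gives $1-f'(t_+)/f'(t)\leq 1-\kappa$. That forces you to check $(1+\theta)(1-\kappa)\leq(1-\theta)^2$, and this is \emph{false} near the endpoint $\theta=\tilde\theta$: at $\theta=\tilde\theta=\kappa/(2-\kappa)$ one computes $(1+\tilde\theta)(1-\kappa)=1-\tilde\theta$, while $(1-\tilde\theta)^2<1-\tilde\theta$. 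So the argument does not close for the full range $0\leq\theta\leq\tilde\theta$.

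The repair is to bound $e_f(t_+,t)$ using the chord of $f'$ on $[t,\lambda]$ rather than on $[t,t_+]$. Convexity of $f'$ gives, for $s\in[t,t_+]\subset[t,\lambda]$,
\[
f'(s)-f'(t)\leq \frac{s-t}{\lambda-t}\,[f'(\lambda)-f'(t)]=\frac{|f'(t)|-\kappa}{\lambda-t}\,(s-t),
\]
so $e_f(t_+,t)\leq\dfrac{|f'(t)|-\kappa}{2(\lambda-t)}(t_+-t)^2$. Now insert $t_+-t=(1+\theta)(f(t)+\varepsilon)/|f'(t)|$ together with the inequality you already proved for $t_+<\lambda$, namely $f(t)+\varepsilon\leq\psi(t)<(|f'(t)|-\kappa)(\lambda-t)$. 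This yields
\[
e_f(t_+,t)<\frac{(1+\theta)^2}{2}\Bigl(1-\frac{\kappa}{|f'(t)|}\Bigr)^{2}(f(t)+\varepsilon)
\leq \frac{(1+\theta)^2(1-\kappa)^2}{2}\,(f(t)+\varepsilon).
\]
The needed inequality is now $(1+\theta)(1-\kappa)\leq 1-\theta$, and this is \emph{exactly} $\theta\leq\tilde\theta$. Adding back $\theta(f(t)+\varepsilon)$ gives the claimed strict contraction. (The paper itself does not supply a proof here; it defers to \cite[Lemma~4.2]{FerreiraSvaiter2012}.)

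One minor point: the positivity $f(t_+)+\varepsilon_+>0$ does not follow ``from the contraction bound'' (that is an upper bound). It follows directly from your identity $f(t_+)+\varepsilon_+=e_f(t_+,t)+\theta(f(t)+\varepsilon)$ together with $e_f(t_+,t)>0$, which holds by strict convexity of $f$ since $t_+\neq t$.
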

Define the   {\it linearization error} of  the majorant function  associated to  $F$ as follows
\begin{equation}\label{eq:def.ers}
        e_{f}(v,t):=f(v)-[f(t) +f'(t)(v-t)],   \qquad
     t, s\in [0,R).
\end{equation}
 We will need the following result about the linearization error, for  proving it  see   \cite[Lemma 3.3 ]{FerreiraSvaiter2012}.  
\begin{lemma} \label{l:errormon}
If  $0\leq b\leq t$, $0\leq a\leq s$ and $t+s<R$, then    there holds:
$$
 e_f(a+b,b) \leq   \max \left \{e_f(t+s,t),   ~  \frac{1}{2} \frac{f'(t+s)-f'(t)}{s}\;a^2\right\},  \qquad s\neq 0.
$$
\end{lemma}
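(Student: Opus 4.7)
The plan is to establish the single quadratic bound
\[
e_f(a+b, b) \;\leq\; \frac{1}{2}\,\frac{f'(t+s) - f'(t)}{s}\, a^2,
\]
which is one of the two arguments of the maximum on the right-hand side of the lemma and thus immediately implies the claim. Under the hypotheses $0 \leq b \leq t$, $0 \leq a \leq s$, $t+s<R$, $s\neq 0$, this quadratic estimate alone already dominates $e_f(a+b,b)$; the companion term $e_f(t+s,t)$ in the $\max$ is never strictly needed, and I read its presence as presentational robustness rather than a necessary case split.

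First I would rewrite the linearization error via the fundamental theorem of calculus as
\[
e_f(a+b, b) \;=\; \int_{0}^{a} \bigl[f'(b+u) - f'(b)\bigr]\, du,
\]
and set $\varphi(u) := f'(b+u) - f'(b)$. By {\bf h2}, $\varphi$ is convex on $[0,a]$ with $\varphi(0)=0$, so the chord-through-the-origin inequality $\varphi(u) \leq (u/a)\,\varphi(a)$ integrates to $e_f(a+b, b) \leq (a/2)\,[f'(b+a) - f'(b)]$. The next step is to enlarge the divided difference $[f'(b+a)-f'(b)]/a$ up to $[f'(t+s)-f'(t)]/s$ in two convexity moves: (i) since $b \leq t$, the shifted increment $x \mapsto f'(x+a) - f'(x)$ is non-decreasing in $x$, which yields $f'(b+a) - f'(b) \leq f'(t+a) - f'(t)$; and (ii) since $G(u) := f'(t+u) - f'(t)$ is convex on $[0,s]$ with $G(0)=0$, the ratio $G(u)/u$ is non-decreasing, so $a \leq s$ gives $[f'(t+a) - f'(t)]/a \leq [f'(t+s) - f'(t)]/s$. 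Chaining these two inequalities and multiplying by $a^2/2$ produces the claimed bound.

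The step requiring the most care is the monotonicity assertion in (i), because the intervals $[b,b+a]$ and $[t,t+a]$ may overlap and the three-slopes lemma for $f'$ cannot then be invoked in a single step. I would instead use the absolute continuity of the convex function $f'$ (from {\bf h2}) to write $f'(x+a) - f'(x) = \int_{x}^{x+a} f''(\tau)\, d\tau$ with $f''$ non-decreasing a.e., and then the change of variable $\tau = \sigma + (t-b)$ gives
\[
[f'(t+a)-f'(t)] - [f'(b+a)-f'(b)] \;=\; \int_{b}^{b+a} \bigl[f''(\sigma + (t-b)) - f''(\sigma)\bigr]\,d\sigma \;\geq\; 0,
\]
valid for all $b \leq t$ regardless of overlap. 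Once this is in place, the remaining manipulations are routine.
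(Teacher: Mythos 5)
Your argument is correct, and it takes a genuinely different route from the paper, which does not prove Lemma~\ref{l:errormon} at all but defers to Lemma~3.3 of \cite{FerreiraSvaiter2012}, where the bound is obtained in the two-term $\max$ form. You instead establish the stronger, max-free estimate $e_f(a+b,b)\le \tfrac12\,[f'(t+s)-f'(t)]\,a^2/s$ directly from three standard consequences of the convexity of $f'$ ({\bf h2}): the chord bound $\int_0^a\varphi(u)\,du\le \tfrac{a}{2}\varphi(a)$ for convex $\varphi$ with $\varphi(0)=0$, the monotonicity in $x$ of the fixed-length increment $f'(x+a)-f'(x)$, and the monotonicity in $u$ of the divided difference $[f'(t+u)-f'(t)]/u$. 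Each step is sound, and since $a\le s$ and $b\le t$ are explicit hypotheses here, the chain closes and the first entry of the $\max$ is indeed superfluous; the $\max$ form remains true a fortiori. One remark on step (i): your worry about overlapping intervals is unfounded, since the four-point inequality $f'(b+a)+f'(t)\le f'(b)+f'(t+a)$ holds for any convex $f'$, any $b\le t$ and $a\ge 0$ --- write $b+a$ and $t$ as convex combinations of $b$ and $t+a$ with complementary weights $\lambda=(t-b)/(t+a-b)$ and add the two convexity inequalities --- so the detour through absolute continuity of $f'$ and an a.e.\ second derivative, while legitimate, is unnecessary machinery. What your approach buys is a self-contained elementary proof and the observation that, under the hypotheses as stated in this paper, the quadratic term alone suffices; what the citation buys is brevity and a statement in exactly the form used downstream in Corollary~\ref{cr:taylor}.
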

\subsubsection{Relationships between the majorant and  nonlinear functions} 
In this section, we will present the main relationships between the majorant function $f$ and the nonlinear function $F$ that we need for proving Theorem~\ref{th:knt1}.   Note that  Robinson's condition, namely,  $\mbox{rge\,}T_{x_0}=\banachb$  implies  that $\mbox{dom\,}T_{x_0}^{-1}=\banachb$.
\begin{proposition} \label{wdns}
  If \,\,$\| x-x_0\|\leqslant t< \bar{t}$ then   $\mbox{dom\,}[T_{x}^{-1}F'(x_0)]=\banacha$ and there holds
$
\left\|T_{x}^{-1}F'(x_0)\right\|\leqslant  -1/f'(t). 
$
As a consequence, $\mbox{rge\,}T_{x}=\banachb$.
\end{proposition}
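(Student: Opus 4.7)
The plan is to derive Proposition~\ref{wdns} as a Banach-lemma style perturbation result, exploiting the decomposition $T_x d = T_{x_0} d + [F'(x)-F'(x_0)]d$ that writes $T_x$ as a linear shift of the convex process $T_{x_0}$. First I would apply the majorant bound \eqref{eq:MCAI} at the pair of points ``$x=x_0$'' and ``$y=x$'' (so that $\|x_0-x_0\|+\|x-x_0\|=\|x-x_0\|\leq t<R$), and use $f'(0)=-1$ from \textbf{h1} to get
\[
\alpha\;:=\;\bigl\|T_{x_0}^{-1}[F'(x)-F'(x_0)]\bigr\|\;\leq\; f'(\|x-x_0\|)+1\;\leq\; f'(t)+1.
\]
Because $t<\bar t$ forces $f'(t)<0$, this gives $\alpha<1$; Robinson's condition makes $\mbox{dom\,}T_{x_0}^{-1}=\banachb$, so by Remark~\ref{r:pn} the same bound holds for $T_{x_0}^{-1}[F'(x_0)-F'(x)]$.

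Next, given an arbitrary $u\in\banacha$, the identity $F'(x)d-F'(x_0)u=F'(x_0)(d-u)+[F'(x)-F'(x_0)]d$ shows that $d\in T_x^{-1}F'(x_0)u$ is equivalent to the set-valued fixed-point equation $d-u\in T_{x_0}^{-1}\bigl(-[F'(x)-F'(x_0)]d\bigr)$. I would solve it by a Picard iteration in $\banacha$: set $d_0:=u$, pick $h_0\in T_{x_0}^{-1}(-[F'(x)-F'(x_0)]u)$ of near-infimal norm, and inductively define $d_{k+1}:=u+h_k$ with $h_k=h_{k-1}+\delta_k$, where $\delta_k\in T_{x_0}^{-1}\bigl(-[F'(x)-F'(x_0)](d_k-d_{k-1})\bigr)$ is again of near-infimal norm; the sublinearity of $T_{x_0}^{-1}$ (the inclusion $T_{x_0}^{-1}(y_1+y_2)\supseteq T_{x_0}^{-1}y_1+T_{x_0}^{-1}y_2$ coming from \eqref{eq:dsblm}) legitimises this nested choice. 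The contractive bound $\|\delta_k\|\leq\alpha\|d_k-d_{k-1}\|$ then yields $\|d_{k+1}-d_k\|\leq\alpha^{k+1}\|u\|$, so $\{d_k\}$ is Cauchy and converges to a limit $d$ with $\|d\|\leq\|u\|/(1-\alpha)\leq -\|u\|/f'(t)$. Passing to the limit in the inclusions $F'(x_0)(d_k-u)+[F'(x)-F'(x_0)]d_{k-1}\in C$, using continuity of the linear operators and closedness of $C$, gives $F'(x)d-F'(x_0)u\in C$. Hence $\mbox{dom\,}[T_x^{-1}F'(x_0)]=\banacha$ with $\|T_x^{-1}F'(x_0)\|\leq -1/f'(t)$. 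The asserted consequence is then immediate: for every $v\in\banacha$ the set $T_x^{-1}F'(x_0)v$ is nonempty, so $F'(x_0)\banacha\subseteq\mbox{rge\,}T_x$, and combining this with $-C\subseteq T_x(0)\subseteq\mbox{rge\,}T_x$ and the fact that $\mbox{rge\,}T_x$ is a convex cone yields $\banachb=\mbox{rge\,}T_{x_0}=F'(x_0)\banacha-C\subseteq\mbox{rge\,}T_x$.

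The main obstacle I anticipate is technical rather than conceptual: the sublinear-mapping norm defined in \eqref{eq;dn} is an infimum that need not be attained, so each $h_0$ and $\delta_k$ must be selected with a controlled $\epsilon$-slack and the geometric estimate reworked so that a single passage $\epsilon\downarrow 0$ at the end recovers the clean constant $-1/f'(t)$. Some care is also required to verify that every step of the Picard iteration lands in $T_{x_0}^{-1}$ of the correct argument; this is exactly where the sublinearity of $T_{x_0}^{-1}$ and Remark~\ref{r:pn} (for the sign reversal) do the real work.
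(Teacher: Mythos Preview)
Your argument is correct. The paper does not supply its own proof here: it simply cites \cite[Proposition~12]{Ferreira2015}, so there is no in-paper argument to compare against. What you have written is precisely the Banach--lemma perturbation argument for convex processes that underlies such results (and is in fact the approach used in the cited reference): bound $\alpha:=\|T_{x_0}^{-1}[F'(x)-F'(x_0)]\|\leq f'(t)+1<1$ via the majorant condition, recast $d\in T_x^{-1}F'(x_0)u$ as the fixed-point inclusion $d-u\in T_{x_0}^{-1}(-[F'(x)-F'(x_0)]d)$, and solve by a contraction/Picard scheme whose increments are controlled by $\alpha$. Your handling of the two genuine technicalities is right: the sign reversal is covered by Remark~\ref{r:pn} (since $\mbox{dom\,}T_{x_0}^{-1}=\banachb$), and the $\epsilon$-slack in selecting near-infimal elements is harmless because you only need the \emph{infimum} $\|T_x^{-1}F'(x_0)u\|$ to satisfy the bound, so a single passage $\epsilon\downarrow0$ at the end suffices. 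The derivation of $\mbox{rge\,}T_x=\banachb$ from $F'(x_0)\banacha\subseteq\mbox{rge\,}T_x$, $-C\subseteq T_x(0)$, and convex-cone closure of $\mbox{rge\,}T_x$ is also clean.
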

\begin{proof}
See   \cite[Proposition 12]{Ferreira2015}.
\end{proof}
Newton's iteration  at a point $x\in \Omega$ happens to be a solution  of the linearization
of the inclusion $F(y)\in C$ at such a point, namely, a solution of the linear  inclusion $F(x)+F'(x)(x-y)\in C$.  Thus, we study the  linearization error of $F$   at a point in $\Omega$
\begin{equation}\label{eq:def.er}
  E_{F}(y,x):= F(y)-\left[ F(x)+F'(x)(y-x)\right],\qquad y,\, x\in \Omega.
\end{equation}
We will bound this error by \eqref{eq:def.ers},  the error in the linearization on the majorant function associated to  $F$.
\begin{lemma} 
 \label{pr:taylor}
  If $x,y\in \banacha$ and $\norm{x-x_0}+\norm{y-x} < R$ then
  $
  \|T_{x_0}^{-1}E_F(y,x)\|\leq
  e_f(\norm{x-x_0}+\norm{y-x},\norm{x-x_0}).
$
\end{lemma}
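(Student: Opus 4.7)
The plan is to represent $E_F(y,x)$ as a Bochner integral along the segment $[x,y]$ and then transfer the estimate through the convex process $T_{x_0}^{-1}$ by exploiting its sublinearity. Since $\|x-x_0\|+\|y-x\|<R$ and $B(x_0,R)\subseteq\Omega$, the segment $[x,y]$ lies in $\Omega$, and the fundamental theorem of calculus applied to $\tau\mapsto F(x+\tau(y-x))$ gives
\[
E_F(y,x)=\int_0^1 \bigl[F'(x+\tau(y-x))-F'(x)\bigr](y-x)\,d\tau.
\]

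Next, I would discretize. For a partition $0=\tau_0<\tau_1<\cdots<\tau_n=1$, put $w_i:=\bigl[F'(x+\tau_i(y-x))-F'(x)\bigr](y-x)(\tau_i-\tau_{i-1})$, so that the Riemann sum $S_n:=\sum_i w_i$ tends to $E_F(y,x)$ in $\banachb$ as the mesh shrinks. Robinson's condition forces $\mbox{dom\,}T_{x_0}^{-1}=\banachb$, so given $\varepsilon>0$ I may choose $d_i\in T_{x_0}^{-1}w_i$ with $\|d_i\|<\|T_{x_0}^{-1}w_i\|+\varepsilon/n$. The subadditivity in \eqref{eq:dsblm} yields $\sum_i d_i\in T_{x_0}^{-1}S_n$, whence $\|T_{x_0}^{-1}S_n\|\leq\sum_i\|T_{x_0}^{-1}w_i\|+\varepsilon$. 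Positive homogeneity of $T_{x_0}^{-1}$ combined with the majorant inequality \eqref{eq:MCAI} applied at the pair $(x,x+\tau_i(y-x))$ then gives
\[
\|T_{x_0}^{-1}w_i\|\leq \bigl[f'(\|x-x_0\|+\tau_i\|y-x\|)-f'(\|x-x_0\|)\bigr](\tau_i-\tau_{i-1})\,\|y-x\|.
\]
Letting the mesh and $\varepsilon$ tend to zero, and using that $T_{x_0}^{-1}$ has closed graph and finite norm, the bound survives in the limit as an integral inequality. A one-dimensional substitution $u=\|x-x_0\|+\tau\|y-x\|$ evaluates the right-hand side to $f(\|x-x_0\|+\|y-x\|)-f(\|x-x_0\|)-f'(\|x-x_0\|)\|y-x\|$, which by \eqref{eq:def.ers} is exactly $e_f(\|x-x_0\|+\|y-x\|,\|x-x_0\|)$.

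The only delicate point is commuting $T_{x_0}^{-1}$ with the integral: the infimum defining $\|T_{x_0}^{-1}\cdot\|$ need not be attained and the map is genuinely set-valued, so one cannot naively push the norm inside $\int$. The $\varepsilon/n$ selection together with the subadditivity of the convex process supplies the correct substitute, and the closed graph of $T_{x_0}^{-1}$ together with $\|T_{x_0}^{-1}\|<+\infty$ (guaranteed by Robinson's condition and Proposition~\ref{wdns}) ensures stability of the estimate under the Riemann limit. Everything else reduces to standard Fr\'echet calculus and a one-variable change of variables.
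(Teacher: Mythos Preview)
Your argument is correct and follows the same overall skeleton as the paper: write $E_F(y,x)$ as the integral $\int_0^1[F'(x+\tau(y-x))-F'(x)](y-x)\,d\tau$, bound the integrand pointwise via the majorant condition \eqref{eq:MCAI}, and integrate to recover $e_f$. The one place you and the paper diverge is in how the convex process $T_{x_0}^{-1}$ is passed through the integral. The paper simply invokes Lemma~2.1 of \cite{LiNg2012}, an off-the-shelf result stating that if $\|T_{x_0}^{-1}G(\tau)\|\le g(\tau)$ pointwise then $\|T_{x_0}^{-1}\int_0^1 G(\tau)\,d\tau\|\le\int_0^1 g(\tau)\,d\tau$. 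You instead prove this step from first principles: Riemann sums, $\varepsilon/n$-selections, subadditivity of $T_{x_0}^{-1}$, and a limit justified by the Lipschitz continuity of $y\mapsto\|T_{x_0}^{-1}y\|$ (a consequence of $\|T_{x_0}^{-1}\|<+\infty$ together with sublinearity). Your route is more self-contained; the paper's is shorter by outsourcing the technicality.

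One small misattribution: the finiteness of $\|T_{x_0}^{-1}\|$ does not come from Proposition~\ref{wdns} (which bounds $\|T_x^{-1}F'(x_0)\|$), but from the general fact recorded in Section~\ref{sec:int.1} that $\mbox{rge\,}T_{x_0}=\banachb$ is equivalent to $\|T_{x_0}^{-1}\|<+\infty$. This does not affect the validity of your argument.
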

\begin{proof}
As  $x, y\in B(x_0,R)$  and the ball is convex $ x+\tau(y-x)\in B(x_0,R),$
  for all $\tau\in [0,1]$.   Since, by assumption,  $\mbox{rge\,}T_{x_0}=\banachb$ we obtain   that $\mbox{dom\,}T_{x_0}^{-1}=\banachb$. Thus, using that $F'(z)$ is a linear mapping for each $z\in \banacha$,  we conclude 
$$
 \left\|T_{x_0}^{-1}\left([F'(x+\tau(y-x))-F'(x)](y-x)\right) \right\| \leq \left\|T_{x_0}^{-1}[F'(x+\tau(y-x))-F'(x)]\right\|\left\|y-x \right\|, 
$$
for all $\tau\in [0,1]$. Hence, as $f$ is a majorant function for $F$ at  $x_0$,  using  \eqref{eq:MCAI} and last inequality we have 
$$
  \left\|T_{x_0}^{-1}\left([F'(x+\tau(y-x))-F'(x)](y-x)\right) \right\|\leqslant \left[f'\left(\|x-x_0\|+\tau\left\|y-x\right\|\right)-f'\left(\|x-x_0\|\right)\right]\|y-x\|,
  $$
for all $\tau\in [0,1]$. Thus, since $\mbox{dom\,}T_{x_0}^{-1}=\banachb$,   we apply   Lemma 2.1 of  \cite{LiNg2012} with  $U=T_{x_0}^{-1}$ and the functions $G(\tau)$ and $g(\tau)$ equals to the expressions in the last inequality,   in  parentheses on  the left hand side  and  on the right hand side,  respectively, obtaining   
\begin{multline*}
    \left\|  T_{x_0}^{-1}\int_0 ^1 [F'(x+\tau(y-x))-F'(x)](y-x)\; d\tau \right\| \\ \leqslant \int_0 ^1   \left[f'\left(\|x-x_0\|+\tau\left\|y-x\right\|\right)-f'\left(\|x-x_0\|\right)  \right]\|y-x\|\;d\tau, 
\end{multline*}
	which, after performing the integration of the right hand side,  taking into account the definition of  $e_f(v, t)$ in \eqref{eq:def.ers}  and that \eqref{eq:def.er} is equivalent to 
   \[
   E_{F}(y,x)=\int_0 ^1 [F'(x+\tau(y-x))-F'(x)](y-x)\; d\tau,
   \]
  yields the desired inequality.
\end{proof}
\begin{lemma}  \label{pr:taylor2}
  If $x,y\in \banacha$ and $\norm{x-x_0}+\norm{y-x} < R$ then
  $
  \|T_{x_0}^{-1}[-E_F(y,x)]\|\leq e_f(\norm{x-x_0}+\norm{y-x},\norm{x-x_0}).
$
\end{lemma}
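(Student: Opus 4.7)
The plan is to mirror the proof of Lemma~\ref{pr:taylor} almost verbatim, inserting a single step that uses Remark~\ref{r:pn} to absorb the minus sign. Since $x,y\in B(x_0,R)$ and this ball is convex, $x+\tau(y-x)\in B(x_0,R)$ for all $\tau\in[0,1]$, so the integral representation
\[
-E_F(y,x)=-\int_0^1 [F'(x+\tau(y-x))-F'(x)](y-x)\,d\tau
\]
is valid.

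The first step is to bound the integrand pointwise. Fix $\tau\in[0,1]$. Robinson's condition gives $\mathrm{dom}\,T_{x_0}^{-1}=\banachb$, so Remark~\ref{r:pn} applies with $T=T_{x_0}^{-1}$ and $A=F'(x+\tau(y-x))-F'(x)$ (a linear map $\banacha\to\banachb$), yielding $\|T_{x_0}^{-1}(-A)\|=\|T_{x_0}^{-1}A\|$. Combining this equality with the composition-norm bound $\|T_{x_0}^{-1}(-A)(y-x)\|\leq\|T_{x_0}^{-1}(-A)\|\,\|y-x\|$ and the majorant condition \eqref{eq:MCAI}, I obtain the very same pointwise estimate that appears in the proof of Lemma~\ref{pr:taylor}:
\[
\left\|T_{x_0}^{-1}(-[F'(x+\tau(y-x))-F'(x)](y-x))\right\|\leq \left[f'(\|x-x_0\|+\tau\|y-x\|)-f'(\|x-x_0\|)\right]\|y-x\|.
\]

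The second step is to push $T_{x_0}^{-1}$ through the integral exactly as in the preceding lemma, invoking Lemma~2.1 of \cite{LiNg2012} with $G(\tau):=-[F'(x+\tau(y-x))-F'(x)](y-x)$ and $g(\tau)$ the scalar upper bound above. Integrating $g$ over $[0,1]$ and recognizing the result through the definition \eqref{eq:def.ers} of $e_f$ (the antiderivative of the right-hand side is $f(\|x-x_0\|+s\|y-x\|)-sf'(\|x-x_0\|)\|y-x\|$ evaluated at $s=0,1$) produces the bound $e_f(\|x-x_0\|+\|y-x\|,\|x-x_0\|)$.

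I do not foresee any real obstacle: the entire content is Lemma~\ref{pr:taylor}'s argument, with Remark~\ref{r:pn} supplying the one line that lets the sign flip pass harmlessly through the sublinear-map norm. The only care needed is to verify the hypotheses of Remark~\ref{r:pn} (full domain of $T_{x_0}^{-1}$, linearity of the difference $F'(\cdot)-F'(x)$), both of which are immediate here.
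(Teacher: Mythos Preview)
Your proposal is correct and is exactly the approach the paper takes: the paper's own proof consists of the single sentence ``follow the same arguments used in the proof of Lemma~\ref{pr:taylor}, by taking into account Remark~\ref{r:pn},'' and you have faithfully unpacked that sentence, applying Remark~\ref{r:pn} to the linear map $A=F'(x+\tau(y-x))-F'(x)$ so that the minus sign disappears from the norm, and then repeating the integral estimate verbatim.
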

\begin{proof}
To prove this lemma we follow the same arguments used in the proof of  Lemma~\ref{pr:taylor},  by taking into account Remark~\ref{r:pn}.
\end{proof}
\begin{corollary} \label{cr:taylor}
If $x, y\in  \banacha$,  $\norm{x-x_0}\leq t$,  $\norm{y-x}\leq s$ and $s+t<R$ then
\begin{align*}
\max \left\{\left\|T_{x_0}^{-1}\left[-E_F(y,x)\right]\right\|,  ~ \|T_{x_0}^{-1}E_F(y,x)\| \right\}&\leq \max \left \{ e_f(t+s,t),~\frac{1}{2} \frac{f'(s+t)-f'(t)}{s}\norm{y-x}^{2}\right\}, \quad  s\neq 0. 
\end{align*}
\end{corollary}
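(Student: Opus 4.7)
The plan is to combine the two preceding lemmas with the error-comparison result Lemma~\ref{l:errormon}. First I would apply Lemma~\ref{pr:taylor} and Lemma~\ref{pr:taylor2} with the given points $x,y \in \banacha$ (whose hypotheses are met since $\|x-x_0\|+\|y-x\| \leq t+s < R$); this immediately yields
$$
\max\bigl\{\|T_{x_0}^{-1}[-E_F(y,x)]\|,\, \|T_{x_0}^{-1}E_F(y,x)\|\bigr\}\leq e_f\bigl(\|x-x_0\|+\|y-x\|,\,\|x-x_0\|\bigr).
$$

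Next I would set $b:=\|x-x_0\|$ and $a:=\|y-x\|$, so by hypothesis $0\leq b\leq t$, $0\leq a\leq s$ with $s\neq 0$, and $t+s<R$. These are precisely the hypotheses needed to invoke Lemma~\ref{l:errormon}, which produces
$$
e_f(a+b,b)\leq \max\!\left\{e_f(t+s,t),\ \frac{1}{2}\frac{f'(t+s)-f'(t)}{s}\,a^{2}\right\}.
$$
Chaining this with the previous bound and substituting back $a=\|y-x\|$ gives exactly the claimed inequality.

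No real obstacle is expected: the corollary is a direct packaging of Lemmas~\ref{pr:taylor}, \ref{pr:taylor2}, and~\ref{l:errormon}. The only small point to check is that the hypotheses of Lemma~\ref{l:errormon} are satisfied; this is immediate from the assumptions $\|x-x_0\|\leq t$, $\|y-x\|\leq s$, and $s+t<R$ of the corollary.
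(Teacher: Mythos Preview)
Your proposal is correct and follows essentially the same approach as the paper: apply Lemmas~\ref{pr:taylor} and~\ref{pr:taylor2}, then invoke Lemma~\ref{l:errormon} with $b=\|x-x_0\|$ and $a=\|y-x\|$. The paper's proof is the same one-line combination of these three results.
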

\begin{proof}
The results follows by direct combination of the Lemmas \ref{pr:taylor}, \ref{pr:taylor2} and \ref{l:errormon} by taking $b=\norm{x-x_0}$ and $a=\norm{y-x}$.
\end{proof}
\begin{lemma} \label{l:bd}
If $x\in \banacha$ and  $\norm{x-x_0}\leq t< R$ then $\|T_{x_0}^{-1}F'(x)\|\leq 2+f'(t).$
\end{lemma}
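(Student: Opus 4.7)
The plan is to decompose $F'(x) = F'(x_0) + [F'(x)-F'(x_0)]$ and bound the norm of the composed sublinear mapping $T_{x_0}^{-1}F'(x)$ by the sum of the norms of the two pieces. Concretely, for any $d\in\banacha$ the sublinearity of $T_{x_0}^{-1}$ in \eqref{eq:dsblm} gives the inclusion
$$
T_{x_0}^{-1}\bigl(F'(x)d\bigr) \;=\; T_{x_0}^{-1}\bigl(F'(x_0)d + [F'(x)-F'(x_0)]d\bigr) \;\supseteq\; T_{x_0}^{-1}\bigl(F'(x_0)d\bigr) + T_{x_0}^{-1}\bigl([F'(x)-F'(x_0)]d\bigr),
$$
and since the inner norm is an infimum, this containment yields
$\|T_{x_0}^{-1}F'(x)d\| \le \|T_{x_0}^{-1}F'(x_0)d\| + \|T_{x_0}^{-1}[F'(x)-F'(x_0)]d\|$, and then taking the supremum over $\|d\|\le 1$ (and applying the sublinear-sum norm inequality $\|S+T\|\le\|S\|+\|T\|$ recalled in Section~\ref{sec:int.1}) gives
$$
\|T_{x_0}^{-1}F'(x)\| \;\le\; \|T_{x_0}^{-1}F'(x_0)\| + \|T_{x_0}^{-1}[F'(x)-F'(x_0)]\|.
$$

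Next I would bound the two summands. For the first, note that for every $d\in\banacha$ we have $F'(x_0)d - F'(x_0)d = 0 \in C$ (since $C$ is a convex cone and hence contains the origin), so by the definition \eqref{ro3} of $T_{x_0}^{-1}$ we get $d \in T_{x_0}^{-1}(F'(x_0)d)$. Therefore $\|T_{x_0}^{-1}F'(x_0)d\| \le \|d\|$, and taking the supremum over the unit ball yields $\|T_{x_0}^{-1}F'(x_0)\|\le 1$. For the second summand, apply the affine majorant condition \eqref{eq:MCAI} with the roles $(x,y)\leftarrow(x_0,x)$ (which is legitimate since $\|x_0-x_0\|+\|x-x_0\|=\|x-x_0\|\le t<R$) to obtain
$$
\|T_{x_0}^{-1}[F'(x)-F'(x_0)]\| \;\le\; f'(\|x-x_0\|) - f'(0) \;=\; f'(\|x-x_0\|) + 1,
$$
using $f'(0)=-1$ from \textbf{h1}. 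The monotonicity of $f'$ from \textbf{h2}, together with $\|x-x_0\|\le t$, upgrades this to $f'(t)+1$.

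Adding the two bounds gives $\|T_{x_0}^{-1}F'(x)\| \le 1 + (f'(t)+1) = 2 + f'(t)$, as required. The only delicate point is the ``triangle inequality'' for $T_{x_0}^{-1}$ applied to a linear sum of operators, which depends on reverse-inclusion monotonicity of the inner norm combined with the set containment from sublinearity, rather than on any equality of images; everything else is a direct application of the majorant condition and the elementary observation that $T_{x_0}^{-1}$ fixes (in the sense $d \in T_{x_0}^{-1}F'(x_0)d$) the identity direction.
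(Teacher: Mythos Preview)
Your proof is correct and follows essentially the same approach as the paper: decompose $F'(x)=F'(x_0)+[F'(x)-F'(x_0)]$, use sublinearity of $T_{x_0}^{-1}$ to get the triangle inequality for the inner norm, bound the first piece by $1$ via the observation that $d\in T_{x_0}^{-1}F'(x_0)d$, and bound the second piece by the majorant condition \eqref{eq:MCAI} together with {\bf h1}--{\bf h2}. Your justification of $\|T_{x_0}^{-1}F'(x_0)\|\le 1$ directly from $0\in C$ is in fact slightly cleaner than the paper's phrasing via $F'(x_0)^{-1}F'(x_0)$.
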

\begin{proof}
First of all, we use Definition of sublinear mapping in (\ref{eq:dsblm}) to  obtain 
$$
T_{x_0}^{-1}F'(x) \supseteq T_{x_0}^{-1}[F'(x)-F'(x_0)] + T_{x_0}^{-1}F'(x_0).
$$
Hence, taking into  account properties of the norm,  we conclude from  above inclusion   that
$$
\Norm{T_{x_0}^{-1}F'(x)} \leq \Norm{T_{x_0}^{-1}[F'(x)-F'(x_0)]} + \Norm{T_{x_0}^{-1}F'(x_0)}.
$$
Since $T_{x_0}^{-1}F'(x_0)\supseteq F'(x_0)^{-1}F'(x_0)$ we have $\| \Norm{T_{x_0}^{-1}F'(x_0)}\|\leq 1$. Thus,  using  assumption (\ref{eq:MCAI}), the last inequality becomes
$$
\Norm{T_{x_0}^{-1}F'(x)} \leq f'(\norm{x-x_0})-f'(0)+1.
$$
Therefore,   assumptions {\bf h1}, {\bf h2}  and the last inequality imply the statement of the lemma.
\end{proof}
The next  result will be used to show that inexact Newton's method is robust with respect to the initial iterate, its prove can be found in   \cite[Proposition~16]{Ferreira2015}.
\begin{proposition} \label{pr:new.01}
If  $y\in B(x_0,R)$ then  $\Norm{T_{x_0}^{-1}[-F(y)]}  \leq f(\Norm{y-x_0})+2\Norm{y-x_0}.$
\end{proposition}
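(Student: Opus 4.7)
The plan is to bound $\|T_{x_0}^{-1}[-F(y)]\|$ by splitting $-F(y)$ into three pieces using the linearization of $F$ at $x_0$. Concretely, from the definition \eqref{eq:def.er} of $E_F(y,x_0)$ one has the identity
\begin{equation*}
-F(y) \;=\; -F(x_0) \;+\; (-F'(x_0))(y-x_0) \;+\; \bigl(-E_F(y,x_0)\bigr).
\end{equation*}
Since $T_{x_0}^{-1}$ is a sublinear mapping, the subadditivity property in \eqref{eq:dsblm} applied to $T_{x_0}^{-1}$ gives
\begin{equation*}
T_{x_0}^{-1}[-F(y)] \;\supseteq\; T_{x_0}^{-1}[-F(x_0)] \;+\; T_{x_0}^{-1}[(-F'(x_0))(y-x_0)] \;+\; T_{x_0}^{-1}[-E_F(y,x_0)],
\end{equation*}
and by the norm properties of sublinear mappings recorded in Section~\ref{sec:int.1} this yields
\begin{equation*}
\bigl\|T_{x_0}^{-1}[-F(y)]\bigr\| \;\leq\; \bigl\|T_{x_0}^{-1}[-F(x_0)]\bigr\| \;+\; \bigl\|T_{x_0}^{-1}(-F'(x_0))\bigr\|\,\|y-x_0\| \;+\; \bigl\|T_{x_0}^{-1}[-E_F(y,x_0)]\bigr\|.
\end{equation*}

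Now I would bound the three terms separately. The first one is at most $f(0)$ directly from the hypothesis \eqref{KH}. For the second one, Remark~\ref{r:pn} gives $\|T_{x_0}^{-1}(-F'(x_0))\|=\|T_{x_0}^{-1}F'(x_0)\|$; and since for every $v\in\banacha$ one has $F'(x_0)v\in F'(x_0)v-C$ (because $0\in C$), the element $v$ itself belongs to $T_{x_0}^{-1}F'(x_0)(v)$, so $\|T_{x_0}^{-1}F'(x_0)\|\le 1$ (this is exactly the step used in the proof of Lemma~\ref{l:bd}). For the third term I would invoke Lemma~\ref{pr:taylor2} with $x=x_0$, which gives
\begin{equation*}
\bigl\|T_{x_0}^{-1}[-E_F(y,x_0)]\bigr\| \;\leq\; e_f(\|y-x_0\|,0) \;=\; f(\|y-x_0\|) - f(0) + \|y-x_0\|,
\end{equation*}
using \eqref{eq:def.ers} together with $f'(0)=-1$ from condition \textbf{h1}.

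Summing the three bounds cancels the $f(0)$ terms and collects $\|y-x_0\|$ twice, producing exactly the claimed inequality $\|T_{x_0}^{-1}[-F(y)]\|\le f(\|y-x_0\|)+2\|y-x_0\|$. The only place requiring genuine care is justifying the subadditivity step at the level of the inner norm, which relies on the convention that $\|Td\|$ is an infimum over selections so that larger graphs produce smaller norms; everything else is a direct book-keeping combination of \eqref{KH}, Remark~\ref{r:pn}, and Lemma~\ref{pr:taylor2}. I do not anticipate a serious obstacle here, since the proposition is essentially a perturbation estimate around the base point where both the Robinson condition and the majorant inequality are anchored.
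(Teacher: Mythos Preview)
Your proof is correct and follows the natural decomposition via the linearization error, using precisely the tools the paper sets up (subadditivity of $T_{x_0}^{-1}$, hypothesis~\eqref{KH}, Remark~\ref{r:pn}, and Lemma~\ref{pr:taylor2}). The paper itself does not prove this proposition in the text but defers to \cite[Proposition~16]{Ferreira2015}; your argument is exactly the expected one in this framework, so there is nothing to correct.
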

\section{Convergence analysis of the inexact Newton Method}\label{sec:inpso}
In this  section we will prove Theorem~\ref{th:knt1}. Before proving  Theorem~\ref{th:knt1},  we need to  study the  inexact Newton's iteration,  associated to the function  $F$, and prove Theorem~\ref{th:knt1} for the case $\rho=0$ and $z_0=x_0$. 
\subsection{The inexact Newton iteration}
The outcome of an inexact Newton iteration  is any point satisfying some error tolerance. Hence, instead of a mapping for inexact Newton iteration, we shall deal with a \emph{family} of mappings, describing all possible inexact iterations. Before defining  the inexact Newton iteration mapping,  we need to define the   {\it  inexact Newton's step mapping,  $D_{F,C,\theta}: B(x_0,\bar{t})  \rightrightarrows  \banacha$, 
\begin{equation} \label{def:dfct}
D_{F,C,\theta}(x):=\argmin_{d\in \banacha} \left\{ \|d\| ~: ~   F(x)+F'(x)d +r\in C\right\}; \quad \quad \max_{w\in \{-r,~ r  \}} \left\|T_{x_0}^{-1}w\right\|\leq \theta \left\|T_{x_0}^{-1}[-F(x)]\right\|, 
\end{equation}
associated to $F$,  $C$}  and $\theta$.   Since $\banacha$ is reflexive, second part of Proposition \ref{wdns} guarantees, in particular, that exact Newton's step $D_{F,C, 0}(x)$ is nonempty, for each  $x\in B(x_0,\bar{t})$.  Since $D_{F,C, 0}(x) \subseteq D_{F,C, \theta}(x)$, we conclude $D_{F,C, \theta}(x) \neq \varnothing$, for  $x\in B(x_0,\bar{t})$. Therefore,  for $0\leq \theta\leq \tilde \theta$, we can define  $ \mathcal{N}_\theta$  the {\it family of  inexact Newton iteration mapping},   $N_{F,C,\theta}: B(x_0,\bar{t})  \rightrightarrows  \banacha$, 
\begin{equation} \label{NF}
N_{F,C,\theta}(x):=x+D_{F,C, \theta}(x).
\end{equation}
One can apply a \emph{single} Newton's iteration  on any $x\in  B(x_0,\bar{t})$ to obtain the set $N_{F, C, \theta}(x)$,  which may not be contained  to $B(x_0,\bar{t})$, or even may not be in the domain of $F$. Therefore, this is enough to guarantee the  well-definedness of only one iteration.   To ensure that inexact Newtonian iteration mapping may be repeated indefinitely, we need some additional results.  First, define some subsets of $B(x_0,\bar{t})$ in which, as we shall prove, inexact Newton iteration mappings \eqref{NF} are ``well behaved''.  Define
\begin{equation}\label{eq:ker} 
{K}(t,\varepsilon):=\left\{x\in \banacha\;:\; \Norm{x-x_0}\leq
t,\;\| T_{x_0}^{-1}[-F(x)]\| \leq f(t)+\varepsilon\right\},
\end{equation} 
and
\begin{equation} \label{eq:kt}  
\mathcal{K}:=\bigcup_{(t,\varepsilon)\in {{\mathcal{A}}}}
  K(t,\varepsilon).
\end{equation}
\begin{proposition} \label{pr:syn} 
   Take $0\leq \theta\leq \tilde \theta$ and $N_{F,C,\theta}\in \mathcal{N}_\theta$. Then,    for any $(t,\varepsilon)\in {\mathcal{A}}$ and
  $x\in K(t,\varepsilon)$
   \begin{equation} \label{eq:ifc} 
 \qquad \Norm{y-x}\leq t_+ -t,
  \end{equation} 
where  $y \in N_{F,C,\theta}(x)$ and $t_+$ is the first component of the  function $n_\theta(t,\varepsilon)$ defined in \eqref{eq:nftheta}.   Moreover,
 \begin{equation} \label{eq:inik} 
  N_{F,C,\theta}(K(t,\varepsilon))\subset K(n_\theta(t,\varepsilon)).
  \end{equation} 
  As a consequence,  
   \begin{equation} \label{eq:ist} 
    n_\theta\left( {\mathcal{A}}\right)\subset {\mathcal{A}}, 
    \qquad N_{F,C,\theta}\left( \mathcal{K}\right)\subset  \mathcal{K}.
  \end{equation} 
\end{proposition}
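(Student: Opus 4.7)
The plan is to establish the three assertions in the order listed. For (\ref{eq:ifc}), write $y=x+d$ where $d$ is the minimal-norm element of $T_{x}^{-1}[-F(x)-r]$ (the feasible set of the inexact Newton subproblem, since $F(x)+F'(x)d+r\in C$ is equivalent to $d\in T_{x}^{-1}[-F(x)-r]$), so $\|y-x\|=\|T_{x}^{-1}[-F(x)-r]\|$. Applying Lemma~\ref{l:incltr} with $z=x$ and $v=x_0$ gives $T_{x}^{-1}F'(x_0)T_{x_0}^{-1}[-F(x)-r]\subseteq T_{x}^{-1}[-F(x)-r]$, and the sublinear composition bound yields
\[
\|y-x\|\leq \|T_{x}^{-1}F'(x_0)\|\cdot\|T_{x_0}^{-1}[-F(x)-r]\|.
\]
Proposition~\ref{wdns} (applicable because $\|x-x_0\|\leq t<\bar t$) bounds the first factor by $-1/f'(t)$; subadditivity of the convex-process norm together with the residual tolerance $\|T_{x_0}^{-1}[-r]\|\leq\theta\|T_{x_0}^{-1}[-F(x)]\|$ and $x\in K(t,\varepsilon)$ bounds the second by $(1+\theta)(f(t)+\varepsilon)$. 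Multiplying and invoking $t_+-t=-(1+\theta)(f(t)+\varepsilon)/f'(t)$ produces (\ref{eq:ifc}).

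For (\ref{eq:inik}), the bound $\|y-x_0\|\leq t_+$ is immediate from the triangle inequality and (\ref{eq:ifc}). To control $\|T_{x_0}^{-1}[-F(y)]\|$, expand $F(y)=F(x)+F'(x)d+E_F(y,x)$ and use feasibility of $d$ to write $F(x)+F'(x)d=c-r$ for some $c\in C$, giving $F(y)=c-r+E_F(y,x)$. Since $C$ is a convex cone ($C+C\subseteq C$), every $u\in T_{x_0}^{-1}[r-E_F(y,x)]$ satisfies $F'(x_0)u+F(y)\in C$, i.e.\ $u\in T_{x_0}^{-1}[-F(y)]$. Hence $T_{x_0}^{-1}[r-E_F(y,x)]\subseteq T_{x_0}^{-1}[-F(y)]$, and subadditivity yields
\[
\|T_{x_0}^{-1}[-F(y)]\|\leq \|T_{x_0}^{-1}[r]\|+\|T_{x_0}^{-1}[-E_F(y,x)]\|\leq \theta(f(t)+\varepsilon)+\|T_{x_0}^{-1}[-E_F(y,x)]\|.
\]
An elementary calculation using $f'(t)(t_+-t)=-(1+\theta)(f(t)+\varepsilon)$ and $\varepsilon_+=\varepsilon+2\theta(f(t)+\varepsilon)$ shows that the desired inequality $\|T_{x_0}^{-1}[-F(y)]\|\leq f(t_+)+\varepsilon_+$ is equivalent to $\|T_{x_0}^{-1}[-E_F(y,x)]\|\leq e_f(t_+,t)$.

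The main obstacle is this last bound. Lemma~\ref{pr:taylor2} only delivers $\|T_{x_0}^{-1}[-E_F(y,x)]\|\leq e_f(\|x-x_0\|+\|y-x\|,\|x-x_0\|)$, and $e_f(v,s)$ is in fact non-increasing in $s$ (with $v\geq s$ fixed), which is the wrong direction to replace $\|x-x_0\|$ by the larger $t$. The resolution is to exploit the joint dependence: for any fixed $h\geq 0$, the map $s\mapsto e_f(s+h,s)=\int_0^h[f'(s+v)-f'(s)]\,dv$ is non-decreasing in $s$, because convexity of $f'$ (property {\bf h2}) makes the increments $f'(s+v)-f'(s)$ over intervals of fixed length $v$ non-decreasing in $s$. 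Taking $h=\|y-x\|$ and using $\|x-x_0\|\leq t$ gives $e_f(\|x-x_0\|+\|y-x\|,\|x-x_0\|)\leq e_f(t+\|y-x\|,t)$, after which monotonicity of $e_f$ in its first argument together with $\|y-x\|\leq t_+-t$ yields $e_f(t+\|y-x\|,t)\leq e_f(t_+,t)$, completing (\ref{eq:inik}).

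Finally, (\ref{eq:ist}) is an immediate consequence: the inclusion $n_\theta(\mathcal{A})\subseteq\mathcal{A}$ is Lemma~\ref{lm:id.t}, and taking the union of (\ref{eq:inik}) over $(t,\varepsilon)\in\mathcal{A}$, combined with $n_\theta(\mathcal{A})\subseteq\mathcal{A}$, produces $N_{F,C,\theta}(\mathcal{K})\subseteq\mathcal{K}$.
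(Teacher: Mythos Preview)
Your proof is correct and follows essentially the same route as the paper: same splitting via Lemma~\ref{l:incltr} and Proposition~\ref{wdns} for \eqref{eq:ifc}, same decomposition $-F(y)=-E_F(y,x)-(F(x)+F'(x)(y-x))$ together with the residual tolerance for \eqref{eq:inik}, and the same appeal to Lemma~\ref{lm:id.t} for \eqref{eq:ist}. The one place you diverge is the bound $\|T_{x_0}^{-1}[-E_F(y,x)]\|\leq e_f(t_+,t)$: the paper obtains it by invoking Corollary~\ref{cr:taylor} (hence Lemma~\ref{l:errormon}) with $s=t_+-t$, whereas you supply the monotonicity argument directly, first pushing the base point from $\|x-x_0\|$ up to $t$ using that convexity of $f'$ makes $h\mapsto f'(s+h)-f'(s)$ non-decreasing in $s$, and then pushing the increment from $\|y-x\|$ up to $t_+-t$ using monotonicity of $f'$. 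This is exactly the content behind the first term in the $\max$ of Lemma~\ref{l:errormon}, so you are unpacking rather than replacing the paper's step; the advantage is that your argument is self-contained and isolates precisely the inequality needed here, while the paper's cited corollary also carries the quadratic term used later for the rate estimate \eqref{eq:qcyk}. One minor wording point: after the displayed bound $\|T_{x_0}^{-1}[-F(y)]\|\leq \theta(f(t)+\varepsilon)+\|T_{x_0}^{-1}[-E_F(y,x)]\|$, the reduction to $\|T_{x_0}^{-1}[-E_F(y,x)]\|\leq e_f(t_+,t)$ is a sufficient condition, not an equivalence.
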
 
\begin{proof}
Take  $0\leq \theta $, $(t,\varepsilon)\in {\mathcal{A}}$ and $x\in K(t,\varepsilon)$. Thus, the definitions of the sets  $ {\mathcal{A}} $ in  \eqref{eq:dt},   ${K}(t,\varepsilon)$ in    \eqref{eq:ker}  together with  Lemma~\ref{lm:id.t} imply that 
\begin{equation}\label{eq:ddks} 
\norm{x-x_0}\leq t<\bar t,\qquad  \| T_{x_0}^{-1}[-F(x)]\|\leq f(t)+\varepsilon, \qquad t-(1+\theta)\frac{f(t)+\varepsilon}{f'(t)}< \lambda\leq R.
\end{equation} 
Take  $y\in N_{F,C,\theta}(x)$ and $r$ as in \eqref{def:dfct}.  Using  the third property of convex process in \eqref{eq:dsblm}, we have
$$
T_{x}^{-1}[-F(x)-r]\supseteq T_{x}^{-1}[-F(x)] +T_{x}^{-1}[-r].				
$$
Applying Lemma~\ref{l:incltr}  in each term in the right hand side of last inclusion, one with $w=-r$,  $z=x$ and $v=x_0$, and the other one with $w=-F(x),$ $z=x$ and $v=x_0$,   we obtain
$$
T_{x}^{-1}[-F(x)-r] \supseteq  T_{x}^{-1}F'(x_0)T_{x_0}^{-1}[-F(x)] +T_{x}^{-1}F'(x_0)T_{x_0}^{-1}[-r].
$$
Hence,   taking norm in both sides  of last inclusion and  using  the properties  of the norm   yields 
$$
\left\|T_{x}^{-1}[-F(x)-r]\right\| \leq  \left\|T_{x}^{-1}F'(x_0)\right\|\left\|T_{x_0}^{-1}[-F(x)]\right\| + \left\|T_{x}^{-1}F'(x_0)\right\| \left\|T_{x_0}^{-1}[-r]\right\|.
$$
Considering that $y-x\in D_{F,C,\theta}(x)$, we obtain that $\|y-x\|=\|T_{x}^{-1}[-F(x)-r]\|$. Thus, combining last inequality with Proposition~\ref{wdns} and  the third inequality in \eqref{eq:ddks},  after some manipulation taking into account \eqref{def:dfct},  we have
\begin{equation} \label{eq:occk}
\|y-x\| \leq    -(1+\theta)\frac{f(t)+\varepsilon}{f'(t)},
\end{equation}
which, using definition of $t_{+}$,   is equivalent to \eqref{eq:ifc}.

Since  $\|y-x_0\|\leq \|y-x\| + \|x-x_0\|$,  thus \eqref{eq:occk},     the first and the  last inequality  in \eqref{eq:ddks} give 
\begin{equation} \label{eq:pcck}
 \|y-x_0\|\leq \displaystyle
  t-(1+\theta)\frac{f(t)+\varepsilon}{f'(t)}<\lambda \leq R.
\end{equation}
On the other hand, the linearization error in \eqref{eq:def.er} and  the third property of convex process in \eqref{eq:dsblm} imply 
$$
 T_{x_0}^{-1}[-F(y)]   \supseteq  T_{x_0}^{-1}[-E_F(y,x)]+  T_{x_0}^{-1}[-F(x)-F'(x)(y-x)].
$$
 Thus,  taking norm in both sides  of last inclusion and  using   the triangle inequality we obtain 
$$
   \|T_{x_0}^{-1}[-F(y)\| \leq \Norm{T_{x_0}^{-1}[-E_F(y,x)]}+ \Norm{T_{x_0}^{-1}[-F(x)-F'(x)(y-x)]}.
$$
Since $y\in N_{F,C,\theta}(x)$  we have  $T_{x_0}^{-1}[r]\subset T_{x_0}^{-1}[-F(x)-F'(x)(y-x)]$, where   $r$ satisfies  $F(x)+F'(x)(y-x)+r\in C$ and   \eqref{def:dfct}. Then,  last inequality implies
$$
\|T_{x_0}^{-1}[-F(y)\| \leq \Norm{T_{x_0}^{-1}[-E_F(y,x)]}+   \theta \left\|T_{x_0}^{-1}[-F(x)]\right\|.
$$  
The second term in the right hand side of last inequality is bound by third inequality in \eqref{eq:ddks}. Thus, letting $s=-(1+\theta)(f(t)+\varepsilon)/f'(t)$,  using \eqref{eq:occk},  first and last inequality \eqref{eq:ddks}, we can apply Corollary~\ref{cr:taylor}  to conclude    that
$$
   \|T_{x_0}^{-1}[-F(y)\| \leq e_f\left(t-(1+\theta)\frac{f(t)+\varepsilon}{f'(t)}, t\right)+ \theta( f(t)+\varepsilon).
$$
Therefore, combining  the last inequality with the definition in \eqref{eq:def.ers}, we easily obtain  that
$$
 \| T_{x_0}^{-1}[-F(y)]\| \leq  f \left(t- (1+\theta)  \frac{f(t)+\varepsilon}{f'(t)}\right) +  \varepsilon+2\theta(f(t)+\varepsilon). 
$$
Finally,  \eqref{eq:pcck}, last inequality,  definitions \eqref{eq:nftheta} and \eqref{eq:ker} proof that the inclusion \eqref{eq:inik} holds.
 
The inclusions in \eqref{eq:ist}  are an immediate  consequence  of Lemma~\ref{lm:id.t},  \eqref{eq:inik} and the definitions  in   \eqref{eq:dt}  and  \eqref{eq:kt}. Thus, the proof of the proposition is concluded.
\end{proof}
\subsection{Convergence analysis}
In this section we  will  proof Theorem~\ref{th:knt1}. First we will  show that the sequence generated by inexact Newton method is well behaved with respect to the set defined in \eqref{eq:ker}.
\begin{theorem}
  \label{th:gc.ki.r}
  Take $0\leq\theta\leq\tilde \theta$ and
  $N_{F,C,\theta}\in\mathcal{N}_\theta$.  For any
  $(t_0,\varepsilon_0)\in{\mathcal{A}}$ and $y_0\in K(t_0,\varepsilon_0)$
  the sequences
  \begin{equation}
    \label{eq:seq.02}
   y_{k+1}\in N_{F,C,\theta}(y_k)  ,
\qquad
  ({t}_{k+1},\varepsilon_{k+1})=n_\theta({t}_k,\varepsilon_k) , \qquad k=0,1,\dots,
  \end{equation}
  are well defined,
  \begin{equation}
    \label{eq:in.02} 
  y_k\in K({t}_k,\varepsilon_k)   ,
    \qquad({t}_k,\varepsilon_k)\in {\mathcal{A}} \qquad k=0,1,\dots,
  \end{equation}
  the sequence $\{{t}_k\}$ is strictly increasing and converges to some
  ${\tilde  t}\in (0, \lambda ]$,
  the sequence $\{\varepsilon_k\}$ is non-decreasing and converges to
  some $\tilde \varepsilon\in [0, \kappa\lambda ]$,
  \begin{equation}  \label{eq:bound.02} 
    \left\|T_{x_0}^{-1}[-F(y_k)]\right\|\leq  f({t}_k)+\varepsilon_k\leq
    \left(\frac{1+\theta^2}{2}\right)^{k}(f({t}_0)+\varepsilon_0),
    \quad k=0,1,\dots, 
  \end{equation} 
 $\{y_k\}$ is contained in $B(x_0,\lambda )$,   converges to a point $x_*\in B[x_0, \lambda]$ such that  $ F(x_*)\in C$,  and  satisfies 
  \begin{equation} \label{eq:conv.02}
    \|y_{k+1}-y_k\|\leq {t}_{k+1}-{t}_k, \qquad \|x_{*}-y_k\|\leq
  {\tilde  t}-{t}_k, \qquad k= 0,1, \ldots \,.
  \end{equation}
  Moreover, if   
\begin{itemize}
  \item[{\bf h5')}] $\lambda<R$,
  \end{itemize}
 then  the sequence $\{y_k \}$ satisfies
\begin{equation} \label{eq:qcyk}
\|y_k-y_{k+1}\|\leq \frac{1+\theta}{1-\theta}\, \left[ \frac{1+\theta}{2}\frac{D^{-}f'(\lambda)}{|f'(\lambda)|}\|y_k-y_{k-1}\|
+\theta\,\frac{2+f'(\lambda)}{|f'(\lambda)|}\right]\|y_k-y_{k-1}\| , \qquad k= 0,1,\ldots \,.
\end{equation}
If, additionally, $0\leq \theta <-2(\kappa+1)+\sqrt{4(\kappa+1)^2+\kappa(4+\kappa)}/(4+\kappa)$ then  $\{y_k \}$ converges $Q$-linearly as  follows
  \begin{equation} \label{eq:qlyk}
  \limsup_{k \to \infty}\frac{\norm{x_*-y_{k+1}}}{\norm{x_*-y_k}}\leq \frac{1+\theta}{1-\theta}\, \left[\frac{1+\theta}{2}+\frac{2\theta}{\kappa} \right], \qquad k= 0,1, \ldots \,.
  \end{equation}
\end{theorem}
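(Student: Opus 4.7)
The plan is to proceed by induction to establish well-definedness and the inclusions \eqref{eq:in.02}. The base case $k=0$ is the hypothesis $y_0\in K(t_0,\varepsilon_0)$ with $(t_0,\varepsilon_0)\in\mathcal{A}$. For the inductive step, Proposition~\ref{pr:syn} guarantees $N_{F,C,\theta}(y_k)\neq\varnothing$, that any $y_{k+1}\in N_{F,C,\theta}(y_k)$ lies in $K(n_\theta(t_k,\varepsilon_k))=K(t_{k+1},\varepsilon_{k+1})$, and that $\|y_{k+1}-y_k\|\leq t_{k+1}-t_k$; Lemma~\ref{lm:id.t} simultaneously delivers $(t_{k+1},\varepsilon_{k+1})\in\mathcal{A}$ together with $t_k<t_{k+1}$ and $\varepsilon_k\leq\varepsilon_{k+1}$. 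Since $(t_k,\varepsilon_k)\in\mathcal{A}$ forces $t_k<\lambda$ and $\varepsilon_k\leq\kappa t_k\leq\kappa\lambda$, monotone convergence yields the limits $\tilde t\in(0,\lambda]$ and $\tilde\varepsilon\in[0,\kappa\lambda]$.

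For \eqref{eq:bound.02}, the first inequality is just the definition \eqref{eq:ker} applied to $y_k\in K(t_k,\varepsilon_k)$, and the second follows by iterating the contraction $f(t_{k+1})+\varepsilon_{k+1}\leq[(1+\theta^2)/2](f(t_k)+\varepsilon_k)$ from Lemma~\ref{lm:id.t}. Because $\theta<1$, the ratio $(1+\theta^2)/2<1$, so $\|T_{x_0}^{-1}[-F(y_k)]\|\to 0$. The step bound $\|y_{k+1}-y_k\|\leq t_{k+1}-t_k$ from \eqref{eq:ifc} forces $\{y_k\}$ to be Cauchy (it is majorized by the Cauchy sequence $\{t_k\}$), hence $y_k\to x_*\in B[x_0,\lambda]$, and telescoping yields $\|x_*-y_k\|\leq \tilde t-t_k$, establishing \eqref{eq:conv.02}. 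To conclude $F(x_*)\in C$, for each $k$ select $w_k\in T_{x_0}^{-1}[-F(y_k)]$ with $\|w_k\|$ within $1/k$ of the infimum; then $\|w_k\|\to 0$ and $F'(x_0)w_k+F(y_k)\in C$, and passing to the limit by continuity of $F$ and closedness of $C$ gives $F(x_*)\in C$.

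Under \textbf{h5'} we have $\lambda<R$, so Proposition~\ref{wdns} applies at each $y_k$ with $t=\lambda$ to yield $\|T_{y_k}^{-1}F'(x_0)\|\leq 1/|f'(\lambda)|$. Arguing as in the derivation of \eqref{eq:occk} in Proposition~\ref{pr:syn}, but now comparing to the previous iterate, I get $\|y_{k+1}-y_k\|\leq \frac{1+\theta}{|f'(\lambda)|}\|T_{x_0}^{-1}[-F(y_k)]\|$. To bound the right-hand side I decompose $-F(y_k)=-E_F(y_k,y_{k-1})-F(y_{k-1})-F'(y_{k-1})(y_k-y_{k-1})$ and use sublinearity of $T_{x_0}^{-1}$ together with the inexact-Newton relation at step $k-1$ (whose residual satisfies the relative-error tolerance) to obtain $\|T_{x_0}^{-1}[-F(y_k)]\|\leq \|T_{x_0}^{-1}[-E_F(y_k,y_{k-1})]\|+\theta\|T_{x_0}^{-1}[-F(y_{k-1})]\|$. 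Corollary~\ref{cr:taylor} with $s=\|y_k-y_{k-1}\|$ and $t=\|y_{k-1}-x_0\|\leq\lambda$ bounds the linearization-error term by $\tfrac12 D^{-}f'(\lambda)\|y_k-y_{k-1}\|^2$ via convexity of $f'$, while the second term is handled by reusing the previous step's version of the same inequality to convert $\|T_{x_0}^{-1}[-F(y_{k-1})]\|$ into a multiple of $\|y_k-y_{k-1}\|$, with the factor $(2+f'(\lambda))/|f'(\lambda)|$ arising from a combination of Lemma~\ref{l:bd} and Proposition~\ref{wdns}. Collecting the $(1+\theta)/(1-\theta)$ prefactor that absorbs the iterated relative-error contributions produces \eqref{eq:qcyk}.

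For the $Q$-linear rate \eqref{eq:qlyk}, I would pass to the limit in \eqref{eq:qcyk}: since $\|y_k-y_{k-1}\|\leq t_k-t_{k-1}\to 0$, the quadratic-type term contributes only its proportionality constant, and using $f'(\lambda)=-\kappa$ (from Remark~\ref{pr:new}) simplifies the coefficients in terms of $\kappa$. The standard conversion from successive-step ratios to limit-distance ratios $\|x_*-y_{k+1}\|/\|x_*-y_k\|$ is carried out by summing the tails $x_*-y_k=\sum_{j\geq k}(y_{j+1}-y_j)$ and comparing against the corresponding majorizing geometric tail of $\tilde t-t_k$; this introduces the extra $(1+\theta)/2$ contribution and yields the asserted bracket $\frac{1+\theta}{2}+\frac{2\theta}{\kappa}$. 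Finally, the stated restriction $0\leq\theta<[-2(\kappa+1)+\sqrt{4(\kappa+1)^2+\kappa(4+\kappa)}]/(4+\kappa)$ is exactly the condition under which $\frac{1+\theta}{1-\theta}\bigl[\frac{1+\theta}{2}+\frac{2\theta}{\kappa}\bigr]<1$, ensuring genuine $Q$-linear convergence. I expect the main obstacle to be the bookkeeping in the third paragraph — cleanly disentangling the quadratic-in-$\|y_k-y_{k-1}\|$ contribution (with coefficient $\tfrac12 D^{-}f'(\lambda)/|f'(\lambda)|$) from the linear-in-$\|y_k-y_{k-1}\|$ contribution induced by the residual tolerance (with coefficient $(2+f'(\lambda))/|f'(\lambda)|$), so that the precise form of \eqref{eq:qcyk} emerges rather than a looser estimate.
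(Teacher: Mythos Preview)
Your first two paragraphs match the paper's argument essentially verbatim. The real issue is where you flag it yourself: in your third paragraph, ``reusing the previous step's version of the same inequality to convert $\|T_{x_0}^{-1}[-F(y_{k-1})]\|$ into a multiple of $\|y_k-y_{k-1}\|$'' is not yet an argument --- the step bound you have, $\|y_k-y_{k-1}\|\leq \frac{1+\theta}{|f'(\lambda)|}\|T_{x_0}^{-1}[-F(y_{k-1})]\|$, points the wrong way. The paper's device is a \emph{second} decomposition: write
\[
-F(y_{k-1}) = E_F(y_k,y_{k-1}) - F(y_k) + F'(y_{k-1})(y_k-y_{k-1}),
\]
bound each piece via sublinearity of $T_{x_0}^{-1}$, substitute the already-established inequality $\|T_{x_0}^{-1}[-F(y_k)]\|\leq \|T_{x_0}^{-1}[-E_F(y_k,y_{k-1})]\|+\theta\|T_{x_0}^{-1}[-F(y_{k-1})]\|$ for the middle term, and solve the resulting self-referential inequality for $\|T_{x_0}^{-1}[-F(y_{k-1})]\|$. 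This is precisely where the factor $1/(1-\theta)$ arises, and Lemma~\ref{l:bd} handles the $F'(y_{k-1})(y_k-y_{k-1})$ term. (A cleaner alternative you may have intended: apply Lemma~\ref{l:incltr} with $z=x_0$, $v=y_{k-1}$, $w=-F(y_{k-1})-r_{k-1}$ and use $\|T_{y_{k-1}}^{-1}[-F(y_{k-1})-r_{k-1}]\|=\|y_k-y_{k-1}\|$ from the argmin, then absorb $r_{k-1}$ via the tolerance; this also works and in fact gives $(1-\theta)/2$ instead of $(1+\theta)/2$ on the quadratic term, which still implies \eqref{eq:qcyk}.)

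Your fourth paragraph misattributes the $(1+\theta)/2$ in \eqref{eq:qlyk}. The paper does not pass to the limit in \eqref{eq:qcyk}; it returns to the finer intermediate estimate involving $t_{k-1},t_k$ (before replacing everything by $\lambda$), uses $\|y_{k-1}-y_k\|\leq t_k-t_{k-1}$ to collapse the difference-quotient factor $\frac{f'(t_k)-f'(t_{k-1})}{t_k-t_{k-1}}\|y_{k-1}-y_k\|$ to $f'(t_k)-f'(t_{k-1})$, and from this obtains a \emph{uniform} step contraction $\|y_{k+1}-y_k\|\leq q\,\|y_k-y_{k-1}\|$ with $q$ equal to the bracket in \eqref{eq:qlyk}. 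The passage from a uniform step contraction to the $Q$-rate on $\|x_*-y_k\|$ is a standard fact (the paper cites \cite{FGO13}) that preserves the constant $q$; tail-summing does not add any $(1+\theta)/2$ term.
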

\begin{proof}
 Since $0\leq\theta\leq\tilde \theta$, $(t_0,\varepsilon_0)\in{\mathcal{A}}$ and $y_0\in K(t_0,\varepsilon_0)$, the  well definition of the sequences $\{(t_k,\varepsilon_k)\}$ and $\{y_k\}$,  as defined in \eqref{eq:seq.02},  follow from the two last inclusions \eqref{eq:ist} in Proposition~\ref{pr:syn}.
  Moreover, since \eqref{eq:in.02} holds for $k=0$, using the first   inclusion in Proposition~\ref{pr:syn}, first inclusion in \eqref{eq:ist} and induction on $k$, we   conclude that \eqref{eq:in.02} holds for all $k$.  The first inequality in \eqref{eq:conv.02}  follows from \eqref{eq:ifc} in Proposition~\ref{pr:syn}, \eqref{eq:seq.02} and  \eqref{eq:in.02},  while the first inequality in \eqref{eq:bound.02} follows from \eqref{eq:in.02} and the definition of $K(t,\varepsilon)$ in \eqref{eq:ker}.  

  The definition of ${\mathcal{A}}$ in \eqref{eq:dt}  implies  $ {\mathcal{A}}\subset [0,\lambda)\times [0,\kappa\lambda)$.   Therefore, using \eqref{eq:in.02} and the definition of  $K(t,\varepsilon)$ we have  
  \[
  t_k\in [0,\lambda),\quad \varepsilon_k\in [0,\kappa\lambda),
  \quad y_k\in B(x_0,\lambda),\qquad k=0,1,\dots .
  \]
  Using \eqref{eq:dt} and Lemma~\ref{lm:id.t} we conclude that
  $\{t_k\}$ is strictly increasing, $\{\varepsilon_k\}$ is
  non-decreasing and the second equality in~\eqref{eq:bound.02} holds
  for all $k$. Therefore, in view of the first two above inclusions,
  $\{t_k\}$ and $\{\varepsilon_k\}$ converge, respectively, to some
  $\tilde t\in (0,\lambda]$ and $\tilde \varepsilon\in
  [0,\kappa\lambda]$. The convergence of $\{t_k\}$ to $\tilde t$, together with the
  first inequality in \eqref{eq:conv.02} and the inclusion $y_k\in
  B(x_0,\lambda)$ implies that $y_k$ converges to some $x_*\in
  B[x_0,\lambda]$ and that the second inequality on \eqref{eq:conv.02}
  holds for all $k$.  Moreover,  taking  the limit  in \eqref{eq:bound.02},  as   $k$ goes to $+\infty$,  we  conclude that   
  $$
  \lim_{k\to +\infty}\left\|T_{x_0}^{-1}[-F(y_k)]\right\|=0.
 $$ 
 Thus,  there exists $\{d_{k}\}\subset \banacha$ such that $ d_{k}\in T_{x_0}^{-1}[-F(y_k)]$,  for all $k=0,1, \ldots$,  with  $\lim_{k\to +\infty} d_k=0$. Since $ d_{k}\in T_{x_0}^{-1}[-F(y_k)]$,  for all $k=0,1, \ldots$, the definition~\ref{ro3} implies that  $F'(x_0)d_k+F(y_k) \in C$ ,  for all $k=0,1, \ldots$. Hence, letting $k$ goes to $+\infty$ in last  inclusion and taking into account that $C$ is closed and $\{y_k\}$ converges to  $x_*$, we conclude that $F(x_*)\in C$. 
	
We are going to prove \eqref{eq:qcyk}. Since $ y_{k+1}\in N_{F,C,\theta}(y_k),$ for k= 0,1, \ldots \,,  we have
\begin{equation}\label{eq:argmin}
\|y_{k+1}-y_k\|= \| T_{y_k}^{-1}[-F(y_k)-r_k]\|, \qquad       \max_{w\in \{-r_k,~ r_k \}} \left\|T_{x_0}^{-1}w\right\| \leq \theta \left\|T_{x_0}^{-1}[-F(y_k)]\right\|.
\end{equation}
The third property  in \eqref{eq:dsblm} implies $T_{y_k}^{-1}[-F(y_k)-r_k] \supseteq T_{y_k}^{-1}[-F(y_k)] +T_{y_k}^{-1}[-r_k]$. Then applying twice Lemma~\ref{l:incltr},  one with   $z=y_k$, $v=x_0$ and  $w= -F(y_k)$  and,  the other one,  with  $z=y_k$, $v=x_0$ and  $w=-r_k$, we obtain that
\begin{equation*}
T_{y_k}^{-1}[-F(y_k)-r_k] \supseteq T_{y_k}^{-1}F'(x_0)T_{x_0}^{-1}[-F(y_k)] +T_{y_k}^{-1}F'(x_0)T_{x_0}^{-1}[-r_k].
\end{equation*}
Combining last inclusion with \eqref{eq:argmin} and properties of the norm  we conclude, after some algebra,  that
\begin{equation}\label{in:cv}
\|y_{k+1}-y_k\| \leq  (1+\theta) \left\|T_{y_k}^{-1}F'(x_0)\right\| \left\|T_{x_0}^{-1}[-F(y_k)]\right\|.
\end{equation}
Using \eqref{eq:def.er}, the third property  in \eqref{eq:dsblm} and triangular inequality, after some  manipulation,  we have
\begin{equation}\label{in:cv1}
  \left\|T_{x_0}^{-1}[-F(y_k)]\right\| \leq \left\|T_{x_0}^{-1}[-E_{F}(y_k,y_{k-1})]\right\| + \left\|T_{x_0}^{-1}[-F(y_{k-1})-F'(y_{k-1})(y_k-y_{k-1})]\right\|.
\end{equation}
On the other hand, because $y_k\in N_{F,C,\theta}(y_{k-1})$  we have  $T_{x_0}^{-1}[r_{k-1}]\subset T_{x_0}^{-1}[-F(y_{k-1})-F'(y_{k-1})(y_k-y_{k-1})]$, where   $r_{k-1}$ satisfies  
$$
\left\|T_{x_0}^{-1}r_{k-1}\right\| \leq \theta \left\|T_{x_0}^{-1}[-F(y_{k-1})]\right\|.
$$ 
Therefore, we have
\begin{equation} \label{eq:bmth}
\left\|T_{x_0}^{-1}[-F(y_{k-1})-F'(y_{k-1})(y_k-y_{k-1})]\right\| \leq   \theta \left\|T_{x_0}^{-1}[-F(y_{k-1})]\right\|, 
\end{equation}
which combined  with    the inequalities in \eqref{in:cv} and \eqref{in:cv1} yields
\begin{equation}\label{in:cv2}
\|y_{k+1}-y_k\| \leq  (1+\theta)\left\|T_{y_k}^{-1}F'(x_0)\right\|\Big[\left\|T_{x_0}^{-1}[-E_{F}(y_k,y_{k-1})]\right\| + \theta\left\|T_{x_0}^{-1}[-F(y_{k-1})] \right\|\Big].
\end{equation}
Using again \eqref{eq:def.er}, the third property  in \eqref{eq:dsblm}  and triangular inequality,  we obtain after some algebra that
$$
\left\|T_{x_0}^{-1}[-F(y_{k-1})\right\| \leq \left\|T_{x_0}^{-1}E_{F}(y_k,y_{k-1})\right\| + \left\|T_{x_0}^{-1} [-F(y_k)]\right\| + \left\|T_{x_0}^{-1}F'(y_{y-1})(y_k-y_{k-1})\right\|.
$$
Combining  the  last inequality with the inequalities in  \eqref{in:cv1} and  \eqref{eq:bmth}  we conclude that
$$
\|T_{x_0}^{-1}[-F(y_{k-1})\| \leq \frac{1}{1-\theta}\left[\|T_{x_0}^{-1}[E_{F}(y_k,y_{k-1})]\| + \|T_{x_0}^{-1}[-E_{F}(y_k,y_{k-1})]\| + \|T_{x_0}^{-1}F'(y_{y-1})(y_k-y_{k-1})\| \right].
$$
Inequality in  \eqref{in:cv2} combined with last inequality becomes
\begin{multline*}
\|y_{k+1}-y_k\| \leq  \frac{1+\theta}{1-\theta} \left\|T_{y_k}^{-1}F'(x_0)\right\| \Big[\left\|T_{x_0}^{-1}[-E_{F}(y_k,y_{k-1})]\right\| + \\  \theta   \left( \left\|T_{x_0}^{-1}[E_{F}(y_k,y_{k-1})]\right\| + \left\|T_{x_0}^{-1}F'(y_{k-1})(y_k-y_{k-1})\right\| \right)\Big].
\end{multline*}
Therefore, combining last inequality with  Proposition~\ref{wdns}, Lemma~\ref{l:bd} and    Corollary~\ref{cr:taylor} with $x=y_{k-1}$, $y=y_{k}$,  $s=t_{k}-t_{k-1}$ and  $t=t_{k-1}$,    we have
\begin{equation} \label{eq:crf}
\|y_k-y_{k+1}\|\leq \frac{1+\theta}{1-\theta}\frac{1}{|f'(t_{k})|}\left[ \frac{1+\theta}{2} \frac{f'(t_{k})-f'(t_{k-1})}{t_{k}-t_{k-1}}\|y_{k-1}-y_k\|
+\theta[2+f'(t_{k-1})]\right]\|y_{k-1}-y_k\|, 
\end{equation}
for $k= 0,1, \ldots \,.$ Since $\|y_{k-1}-y_k\|\leq t_k-t_{k-1}$, see \eqref{eq:conv.02},  $f'<-\kappa<0$ in $[0, \lambda) $,  \eqref{eq:qlyk} follows  from last inequality. Using  {\bf h5' } and  Theorem 4.1.1 on p. 21 of \cite{HiriartBaptiste} and taking into account that $|f'|$ is  decreasing  in $[0, \lambda]$, $f'$ is increasing in  $[0, \lambda]$ and $\{t_k\}\subset [0, \lambda]$,  we obtain  that  \eqref{eq:qcyk}   follows from above inequality.

For concluding the proof, it remains to prove that $\{y_k \}$ converges $Q$-linearly as in  \eqref{eq:qlyk}.  First note that  $\|y_{k-1}-y_k\|\leq t_k-t_{k-1}$ and  $f'(t_{k-1})\leq f'(t_k)<0$. Thus,   we conclude from \eqref{eq:crf} that
\begin{equation} \label{eq:crf}
\|y_k-y_{k+1}\|\leq \frac{1+\theta}{1-\theta}\left[ \frac{1+\theta}{2} +\frac{2\theta}{\kappa}\right]\|y_{k-1}-y_k\|, \qquad k=0,1, \ldots.
\end{equation}
which,  from Proposition 2  of \cite{FGO13},  implies that   \eqref{eq:qlyk} holds. Since $0\leq \theta <-2(\kappa+1)+\sqrt{4(\kappa+1)^2+\kappa(4+\kappa)}/(4+\kappa)$,   the quantity in the right hand side of  \eqref{eq:qlyk} is less than one. Hence  $\{y_k \}$ converges $Q$-linearly, which conclude the proof. 
\end{proof}
\begin{proposition}  \label{pr:ar2}
Let $R>0$ and $ f:[0,R)\to \mathbb{R}$ a continuously differentiable function.   Suppose that $x_0 \in \Omega$,   $f$ is a majorant function for $F$ at  $x_0$ and satisfies {\bf h4}. If  $0\leq \rho<  \beta/2$, then for any $z_0\in  B(x_0,  \rho)$ the scalar  function    $g:[0,R-\rho)\to \mathbb{R}$, defined by
\begin{equation} \label{eq:maj.01}
  g(t):=\frac{-1}{f'(\rho)}[f(t+\rho)+2\rho], 
\end{equation}
is a majorant function for $F$ at $z_0$ and also satisfies condition {\bf h4}.
\end{proposition}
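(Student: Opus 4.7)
The plan is to verify the six conditions defining a majorant function for $g$ at $z_0$ in turn. For the containment $B(z_0,R-\rho)\subseteq \Omega$, I apply the triangle inequality: any $y\in B(z_0,R-\rho)$ satisfies $\|y-x_0\|\leq \|y-z_0\|+\|z_0-x_0\|<(R-\rho)+\rho=R$, so $B(z_0,R-\rho)\subseteq B(x_0,R)\subseteq \Omega$.

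The second and main step is the majorant inequality for $g$. For $x,y\in B(z_0,R-\rho)$ with $\|x-z_0\|+\|y-x\|<R-\rho$, Lemma~\ref{l:incltr} with $v=x_0$, $z=z_0$, together with submultiplicativity of the sublinear norm, gives
$\|T_{z_0}^{-1}[F'(y)-F'(x)]\|\leq \|T_{z_0}^{-1}F'(x_0)\|\,\|T_{x_0}^{-1}[F'(y)-F'(x)]\|$.
Proposition~\ref{wdns} applied at $z_0$ (noting $\|z_0-x_0\|\leq \rho<\bar t$, which follows from Proposition~\ref{pr:maj.f} since $\rho<\beta/2<\bar t/2$) yields $\|T_{z_0}^{-1}F'(x_0)\|\leq -1/f'(\rho)$, while the majorant inequality for $f$ at $x_0$ yields $\|T_{x_0}^{-1}[F'(y)-F'(x)]\|\leq f'(\|x-x_0\|+\|y-x\|)-f'(\|x-x_0\|)$. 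Using $\|x-x_0\|\leq \|x-z_0\|+\rho$ and the convexity of $f'$ (which makes the secant $u\mapsto f'(u+s)-f'(u)$ non-decreasing in $u$ for fixed $s\geq 0$), I bound the latter difference by $f'(\|x-z_0\|+\|y-x\|+\rho)-f'(\|x-z_0\|+\rho)$. Multiplying by $-1/f'(\rho)>0$ and using the identity $g'(t)=-f'(t+\rho)/f'(\rho)$ from the definition of $g$, this equals exactly $g'(\|x-z_0\|+\|y-x\|)-g'(\|x-z_0\|)$.

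The remaining conditions are quick checks. For {\bf h1}: $g'(0)=-f'(\rho)/f'(\rho)=-1$, and since $f'$ is strictly increasing with $f'(0)=-1$ we have $f(\rho)>f(0)-\rho>-\rho$ (by {\bf h1} for $f$), giving $f(\rho)+2\rho>0$ and so $g(0)>0$. For {\bf h2}: $g'$ is a positive affine reparametrization of $f'$, hence convex and strictly increasing. For {\bf h4}: since $\rho<\beta/2$ and $\beta=-\lim_{t\to \bar t_-}f(t)$ (by Proposition~\ref{pr:maj.f}) with $f$ strictly decreasing on $[0,\bar t)$, there exists $t_1\in(\rho,\bar t)$ with $f(t_1)<-2\rho$, whence $t_1-\rho\in(0,R-\rho)$ satisfies $g(t_1-\rho)<0$; this condition also implies {\bf h3}.

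The main obstacle is the second step, where the key idea is to insert $F'(x_0)$ via Lemma~\ref{l:incltr} to split the estimate into a ``transport'' factor bounded by Proposition~\ref{wdns} and the original majorant estimate, and then to use convexity of $f'$ to absorb the shift from $\|x-x_0\|$ to $\|x-z_0\|+\rho$. Once this chain is set up correctly so the inequalities point in the right direction, the remaining verifications are essentially routine bookkeeping.
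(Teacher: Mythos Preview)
Your proof is correct and complete. The paper itself does not give a proof of this proposition but simply defers to \cite[Proposition~17]{Ferreira2015}, so there is no in-paper argument to compare against; your write-up supplies exactly the direct verification one would expect (and that the cited reference carries out): the ball containment via the triangle inequality, the majorant inequality via Lemma~\ref{l:incltr} combined with Proposition~\ref{wdns} and the convexity of $f'$ to absorb the shift by $\rho$, and the routine checks of {\bf h1}--{\bf h4} for $g$.
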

\begin{proof}
To prove see Proposition~17 of \cite{Ferreira2015}.
\end{proof}
\begin{proof}[{\bf Proof of Theorem~\ref{th:knt1}}]
First we will prove Theorem~\ref{th:knt1} with $\rho=0$ and $z_0=x_0$. Note
that, from the definition in \eqref{eq:dktt.0}, we have
\[
 \kappa_0=\kappa,\quad \lambda_0=\lambda,\quad \tilde\theta_0=\tilde\theta.
\]
The assumption \eqref{KH} implies that $x_0\in K(0,0)$. Since $(t_0,\varepsilon_0)=(0,0)\in {\mathcal{A}}$  and $ y_0=x_0\in K(0,0)$,  we apply  Theorem~\ref{th:gc.ki.r}  with   $z_k=y_k$, for $k=0,1, \ldots$,   to  conclude that Theorem~\ref{th:knt1} holds for $\rho=0$ and $z_0=x_0$. 

We are going to prove the general case.  From   Proposition~\ref{pr:maj.f} we have $\rho<\bar t$, which implies  that $\|z_0-x_0\|<\rho<\bar t$. Thus, we can apply Proposition~\ref{wdns}  to obtain
\begin{equation} \label{eq:maj.02}
  \Norm{T_{z_0}^{-1}F'(x_0)}\leq \frac{-1}{f'(\rho)}.
\end{equation}
Moreover,  the point $z_0$ satisfies the Robinson's condition, namely,
$$
\mbox{rge\,}T_{z_0}=\banachb.
$$
Then, using  Lemma~\ref{l:incltr}, property of the norm,   \eqref{eq:maj.02} and Proposition~\ref{pr:new.01} with  $y=z_0$ we have
\begin{align*}
  \Norm{T_{z_0}^{-1}[-F(z_0)]}\leq &  \Norm{T_{z_0}^{-1}F'(x_0)}
  \Norm{T_{x_0}^{-1}[-F(z_0)]}\\
  \leq & \frac{-1}{f'(\rho)} [f(\Norm{z_0-x_0}) +2\Norm{z_0-x_0}].
\end{align*}
Since $f'\geq -1$, the function $t\mapsto f(t)+2t$ is (strictly)
increasing.  Thus,  combining this fact with the last inequality, the inequality $\|z_0-x_0\|<\rho$ and \eqref{eq:maj.01} we conclude that
\[
\Norm{T_{z_0}^{-1}[-F'(z_0)]}\leq g(0).
\]
Proposition~\ref{pr:ar2} implies that  $g$, defined in \eqref{eq:maj.01},  is a majorant function for $F$ at point $z_0$ and also satisfies condition {\bf h4}. Moreover,   \eqref{eq:maj.01}  and $\kappa_\rho$, $\lambda_\rho$  and $\tilde\theta_\rho$ as defined in \eqref{eq:dktt}  imply
\[
  \kappa_\rho=\sup_{0<t<R-\rho}\frac{-g(t)}{t},\qquad
   \lambda_\rho =\sup \{t\in [0,R-\rho): \kappa_\rho+g'(t)<0\},
  \qquad \tilde\theta_\rho=\frac{\kappa_\rho}{2-\kappa_\rho},
\]
which are the same as \eqref{eq:dktt} with $g$ instead of $f$, then  we can  apply Theorem~\ref{th:gc.ki.r} for $F$ and the
majorant function $g$ at point $z_0$ and $\rho=0$, to concluding  that the sequence $\{z_k\}$ is well defined, remains in $B(z_0,\lambda_\rho)$, satisfies (\ref{FC}) and converges to some $x_*\in B[z_0, \lambda_\rho]$ with $F(x_*)\in C$.  Furthermore, since 
$$
g'(t)=f'(t+\rho)/|f'(\rho)|, \qquad D^{-}g'(t)=D^{-}f'(t+\rho)/|f'(\rho)|, \qquad t \in [0,R-\rho), 
$$
 after some algebra, we conclude that inequalities  \eqref{eq:slc} and \eqref{eq:lc} also hold. Therefore, the proof of theorem is concluded. 
\end{proof}
\section{Special cases} \label{sec:scinmer}

In this section  we  will use Theorem~\ref{th:knt1} to analyze    the convergence of the  inexact Newton's  method  for cone inclusion problems under  affine invariant   Lipschitz condition and  in the setting of Smale's $\alpha$-theory.    Up to our knowledge, this is the first time that the inexact Newton method for cone inclusion problems with a relative error tolerance under Lipschitz's condition and Smale's condition are analyzed. 
\subsection{ Under  affine invariant   Lipschitz condition}
In this section we present   the convergence analysis  of the  inexact Newton's  method  for cone inclusion problems under   affine invariant   Lipschitz condition. Let $\banacha$, $\banachb$ be Banach spaces,  $\banacha$ reflexive, $\Omega\subseteq \banacha$ an open set,  $x_0 \in \Omega$ and  $L>0$.  A  continuously Fr\'echet differentiable function $F:{\Omega}\to \banachb$  satisfies the  {\it affine invariant   Lipschitz condition} with constant $L$ at $x_0$, if  $B(x_0, 1/L)\subset \Omega$  and
  \[ \left\|T_{x_0}^{-1}\left[F'(y)-F'(x)\right]\right\| \leq L
  \|x-y\|,\qquad x,\, y\in B(x_0, 1/L).
  \]

\begin{theorem}\label{th:kngerl}
Let   $C\subset \banachb $ a nonempty closed convex cone.  Suppose that $x_0 \in \Omega$ and $F$ satisfies  the Robinson's and the   affine invariant   Lipschitz condition with constant $L>0$ at  $x_0$ and  
  \[ \|T_{x_0}^{-1}F(x_0)\|\leq b,\qquad
  0\leq \theta\leq (1-\sqrt{2bL})/(1+\sqrt{2bL}).
  \] 
  Then,   $\{x_k\}$ generated  by the inexact Newton method for solving $F(x)\in C$ with starting point
  $x_0$ and residual relative error tolerance $\theta$:  $ x_{k+1}:={x_k}+d_k,$
  \[
 d_k \in  \argmin_{d\in \banacha}\left\{\|d\| ~:  ~ F(x_k)+F'(x_k)d +r_k \in C \right\}, \quad \qquad  \max_{w\in \{-r_k,~ r_k  \}} \left\|T_{x_0}^{-1}w\right\|\leq \theta \left\|T_{x_0}^{-1}[-F(x_k)]\right\|, 
  \]
  for all $k=0,1,...$, is well defined,  for any particular choice of each $d_k$,  $\|T_{x_0}^{-1}[-F(x_k)]\|\leq[(1+\theta^2)/2]^{k}b,$   for all $k=0,1,...$,  $\{x_k\}$ is contained in
  $B(x_0, \lambda )$,  converges to a point  $x_*\in B[x_0, \lambda]$,  where $  \lambda :=\sqrt{2bL}/L.$ Moreover,   $\{x_k \}$ satisfies
\[
\|x_k-x_{k+1}\|\leq \frac{1+\theta}{1-\theta}  \left[ \frac{1+\theta}{2}\frac{L}{1-\sqrt{2bL}}\,\|x_{k-1}-x_k\|
+\theta\,\frac{1+\sqrt{2bL}}{1-\sqrt{2bL}}\right]\|x_{k-1}-x_k\|,  \qquad k= 0,1, \ldots .
\]
  If, additionally, $0\leq \theta <\left(-2(2-\sqrt{2bL})+\sqrt{10bL -14\sqrt{2bL}+21}\right)\big/(5-\sqrt{2bL})$ then  $\{x_k \}$ converges $Q$-linearly as  follows
  \[
  \limsup_{k \to \infty}\frac{\norm{x_*-x_{k+1}}}{\norm{x_*-x_k}}\leq  \frac{1+\theta}{1-\theta} \left[\frac{1+\theta}{2}+\frac{2\theta}{1-\sqrt{2bL}} \right], \qquad k= 0,1,
  \ldots \,.
  \]
\end{theorem}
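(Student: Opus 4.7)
The plan is to reduce Theorem~\ref{th:kngerl} to an application of Theorem~\ref{th:knt1} with the classical Kantorovich majorant
\[
 f(t) := \frac{L}{2}\,t^{2} - t + b, \qquad t\in [0,R),
\]
taking $R = 1/L$ (so that $B(x_{0},R)\subseteq\Omega$ by hypothesis) and using the pair $\rho=0$, $z_{0}=x_{0}$ in the notation of Theorem~\ref{th:knt1}. The first step is to check that $f$ is a majorant function for $F$ at $x_{0}$ satisfying {\bf h1}--{\bf h4}: clearly $f(0)=b>0$, $f'(0)=-1$, and $f''\equiv L>0$, so $f'$ is convex and strictly increasing; the hypothesis $\theta\le(1-\sqrt{2bL})/(1+\sqrt{2bL})$ forces $2bL\le 1$, which gives the roots $t_{\pm}=(1\pm\sqrt{1-2bL})/L\in(0,R)$ and hence {\bf h3} and (when $2bL<1$, which I handle separately from the degenerate boundary case) {\bf h4}. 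The inequality in \eqref{eq:MCAI} is immediate from the affine Lipschitz hypothesis, since
\[
 f'(\|x-x_{0}\|+\|y-x\|)-f'(\|x-x_{0}\|)=L\|y-x\|.
\]
The initial condition \eqref{KH} with $\rho=0$ is exactly $\|T_{x_{0}}^{-1}[-F(x_{0})]\|\le b=f(0)$.

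The second step is to compute the constants $\kappa$, $\lambda$, $\tilde\theta$ of \eqref{eq:dktt.0} for this $f$. For $\kappa$, we maximize $-f(t)/t=1-b/t-Lt/2$ on $(0,R)$; the critical point is $t^{\star}=\sqrt{2b/L}$ and the maximum value is $\kappa=1-\sqrt{2bL}$. For $\lambda$, we solve $\kappa+f'(t)=\kappa-1+Lt<0$ to obtain $\lambda=\sqrt{2bL}/L$. The definition $\tilde\theta=\kappa/(2-\kappa)$ yields exactly
\[
 \tilde\theta=\frac{1-\sqrt{2bL}}{1+\sqrt{2bL}},
\]
which matches the tolerance in the hypothesis. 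Thus Theorem~\ref{th:knt1} applies and gives the well-definedness of $\{x_{k}\}$, the bound $\|T_{x_{0}}^{-1}[-F(x_{k})]\|\le[(1+\theta^{2})/2]^{k}b$, containment of $\{x_{k}\}$ in $B(x_{0},\lambda)$, and convergence to some $x_{\ast}\in B[x_{0},\lambda]$ with $F(x_{\ast})\in C$.

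For the last two displayed inequalities I need to check {\bf h5}, that is $\lambda<R$: this is the strict inequality $\sqrt{2bL}<1$. Under it, $f'$ is $C^{1}$ at $\lambda$ with $D^{-}f'(\lambda)=f''(\lambda)=L$, $|f'(\lambda)|=1-\sqrt{2bL}$, and $2+f'(\lambda)=1+\sqrt{2bL}$; plugging these values into \eqref{eq:slc} with $\rho=0$ reproduces the linear-type bound in the statement. Finally, substituting $\kappa=1-\sqrt{2bL}$ into the Q-linear rate in \eqref{eq:lc} gives
\[
 \limsup_{k\to\infty}\frac{\|x_{\ast}-x_{k+1}\|}{\|x_{\ast}-x_{k}\|}\le \frac{1+\theta}{1-\theta}\Bigl[\frac{1+\theta}{2}+\frac{2\theta}{1-\sqrt{2bL}}\Bigr],
\]
while substituting $\kappa=1-\sqrt{2bL}$ into the Q-linear admissibility range $0\le\theta<[-2(\kappa+1)+\sqrt{4(\kappa+1)^{2}+\kappa(4+\kappa)}]/(4+\kappa)$ yields the explicit range written in the theorem (up to the routine arithmetic of expanding $(2-\sqrt{2bL})^{2}$ and $(1-\sqrt{2bL})(5-\sqrt{2bL})$).

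The only nontrivial step is the identification of $\kappa$ by optimizing the ratio $-f(t)/t$; everything else is a direct specialization of Theorem~\ref{th:knt1}. Degenerate cases $b=0$ or $2bL=1$ can be treated by perturbation or by observing that the inequalities reduce to equalities that still make sense in the statement.
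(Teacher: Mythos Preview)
Your proposal is correct and follows exactly the same route as the paper: introduce the quadratic majorant $f(t)=\tfrac{L}{2}t^{2}-t+b$ on $[0,1/L)$ and apply Theorem~\ref{th:knt1} with $\rho=0$, $z_{0}=x_{0}$. The paper's own proof is a two-line invocation of Theorem~\ref{th:knt1}; you have simply supplied the verifications (of {\bf h1}--{\bf h4}, of \eqref{eq:MCAI}, and of the explicit values $\kappa=1-\sqrt{2bL}$, $\lambda=\sqrt{2bL}/L$, $\tilde\theta=(1-\sqrt{2bL})/(1+\sqrt{2bL})$, $D^{-}f'(\lambda)=L$, $|f'(\lambda)|=1-\sqrt{2bL}$) that the paper leaves implicit.
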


\begin{proof} 
Take $\tilde{\theta}=(1-\sqrt{2bL})/(1+\sqrt{2bL})$. Since   $f:[0,1/L)\to \mathbb{R},$  defined by $f(t):=(L/2)t^2-t+b,$ is a majorant function for $F$ at point $x_0$, all result follow from Theorem~\ref{th:knt1}, applied to this particular context.
\end{proof}
\begin{remark}
In Theorem~\ref{th:kngerl},  if $\theta=0$ and $C=\{0\}$ then  we obtain,  \cite[Theorem 18]{Ferreira2015}  for the  exact Newton method and  \cite[Theorem 6.3]{FerreiraSvaiter2012} for the inexact Newton method,  respectively.
\end{remark}
\subsection{ Under  affine invariant  Smale's condition}
In this section we present    the convergence analysis  of the  inexact Newton's  method  for cone inclusion problems under  affine invariant   Smale's condition.

 Let $\banacha$ and $\banachb$ be Banach spaces, $\Omega \subseteq \banacha$ and   $x_0 \in \Omega$.    A  continuous  function  $F:{\Omega}\to \banachb$  and  analytic in $int(\Omega)$  satisfies the  {\it affine invariant   Smale's condition} with constant $\gamma$ at $x_0$, if  $B(x_0, 1/\gamma)\subset \Omega$  and
  \[ 
   \gamma := \sup _{ n > 1 }\left\| \frac {T_{x_0}^{-1}F^{(n)}(x_0)}{n!}\right\|^{1/(n-1)} <+\infty.
  \]
\begin{theorem} \label{th:kngesrs}

Let   $C\subset \banachb $ a nonempty closed convex cone.  Suppose that $x_0 \in \Omega$ and $F$ satisfies  the Robinson's and the   affine invariant   Smale's condition  with constant $\gamma$ at  $x_0$ and   there exists  $b>0$ such that
  \[
  \|T_{x_0}^{-1}[-F(x_0)]\|\leq b,\qquad b\gamma<3-2\sqrt{2}, \qquad 0\leq\theta\leq[1-2\sqrt{\gamma b}-\gamma b]/[1+2\sqrt{\gamma b}+\gamma b].
  \]
  Then,  $\{x_k\}$ generated  by the inexact Newton method for solving
  $F(x)\in C$ with starting point $x_0$ and residual relative error
  tolerance $\theta$: $x_{k+1}={x_k}+d_k,$
$$
 d_k \in  \emph{argmin}\left\{\|d\| ~:~ d\in \banacha,   ~ F(x_k)+F'(x_k)d +r_k \in C \right\}, \quad \qquad  \max_{w\in \{-r_k,~ r_k  \}} \left\|T_{x_0}^{-1}w\right\|\leq \theta \left\|T_{x_0}^{-1}[-F(x_k)]\right\|, 
$$
 for all $k=0,1,...$, is well defined,  for any particular choice of each $d_k$, $\|T_{x_0}^{-1}[-F(x_k)]\|\leq[(1+\theta^2)/2]^{k}b,$  for all $k=0,1,...$,  $\{x_k\}$  is contained in $B(x_0, \lambda )$ and  converges to a point $x_* \in  B[x_0, \lambda ] $ such that  $F(x_*)\in C$, where $ \lambda:=b/[\sqrt{\gamma b}+\gamma b].$ Moreover, letting $f:[0,1/\gamma) \to \mathbb{R}$ be  defined by   $ f(t)=t/(1-\gamma t)-2t+b,$ the sequence $\{x_k \}$ satisfies
\[
\|x_k-x_{k+1}\|\leq \frac{1+\theta}{1-\theta} \left[ \frac{1+\theta}{2}\frac{D^{-}f'(\lambda)}{|f'(\lambda)|}\|x_{k-1}-x_k\| +\theta\,\frac{2+f'(\lambda)}{|f'(\lambda)|}\right]\|x_{k-1}-x_k\|,   \quad  \qquad k= 0,1, \ldots .
\]
  If, additionally, $0\leq \theta <\left(-2(2-2\sqrt{\gamma b}-\gamma b)+ \sqrt{5\gamma^2 b^2 -44\sqrt{\gamma b}+ 20\gamma b \sqrt{\gamma b} -2\gamma b +21}\right) \big/(5-2\sqrt{\gamma b}-\gamma b)$ then  $\{x_k \}$ converges $Q$-linearly as  follows
  \[
 \limsup_{k \to \infty}\frac{\norm{x_*-x_{k+1}}}{\norm{x_*-x_k}}\leq   \frac{1+\theta}{1-\theta}  \left[\frac{1+\theta}{2} + \frac{2\theta}{1-2\sqrt{\gamma b}-\gamma b} \right], \qquad k= 0,1,
  \ldots \,.
  \]
\end{theorem}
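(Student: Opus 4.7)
The plan is to reduce to Theorem~\ref{th:knt1} (with $\rho=0$ and $z_0=x_0$) by exhibiting the classical Smale majorant
\[
 f(t) := \frac{t}{1-\gamma t} - 2t + b, \qquad t\in [0,1/\gamma),
\]
and then translating its constants $\kappa$, $\lambda$, $\tilde\theta$ (as in \eqref{eq:dktt.0}) into the quantities $\sqrt{\gamma b}$, $b/(\sqrt{\gamma b}+\gamma b)$ and $(1-2\sqrt{\gamma b}-\gamma b)/(1+2\sqrt{\gamma b}+\gamma b)$ that appear in the statement. A direct computation gives $f(0)=b$, $f'(t)=1/(1-\gamma t)^2-2$, so $f'(0)=-1$, and $f''(t)=2\gamma/(1-\gamma t)^3>0$, $f'''(t)>0$ on $[0,1/\gamma)$, so $f'$ is strictly increasing and convex, settling h1 and h2. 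The roots of $f$ are determined by $2\gamma t^2-(1+\gamma b)t+b=0$, whose discriminant $1-6\gamma b+\gamma^2 b^2$ is strictly positive exactly when $\gamma b<3-2\sqrt 2$, which is our standing hypothesis; this secures h3 and, since $f$ is strictly convex, also h4.

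The central step is the verification of the majorant inequality \eqref{eq:MCAI} for this choice of $f$. For $x,y\in B(x_0,1/\gamma)$ with $\|x-x_0\|+\|y-x\|<1/\gamma$, I would write
\[
 F'(y)-F'(x)=\sum_{n\ge 2}\frac{F^{(n)}(x_0)}{(n-1)!}\int_0^1 (x-x_0+\tau(y-x))^{n-1}\,d\tau\,(y-x)/\|y-x\|\cdot\|y-x\|,
\]
apply $T_{x_0}^{-1}$ term by term using Lemma~\ref{l:incltr} and the sublinearity of convex processes (as used throughout Section~\ref{sec:PR}), and bound $\|T_{x_0}^{-1}F^{(n)}(x_0)/n!\|\le\gamma^{n-1}$ by the affine invariant Smale condition. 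Summing the resulting geometric-type series yields
\[
 \left\|T_{x_0}^{-1}[F'(y)-F'(x)]\right\|\le \frac{1}{(1-\gamma(\|x-x_0\|+\|y-x\|))^2}-\frac{1}{(1-\gamma\|x-x_0\|)^2},
\]
which is exactly $f'(\|x-x_0\|+\|y-x\|)-f'(\|x-x_0\|)$, so $f$ is a majorant function for $F$ at $x_0$. This computation is the main technical obstacle; the rest is bookkeeping.

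Next I would identify the constants in \eqref{eq:dktt.0}. Setting $u:=\sqrt{\gamma b}$, one checks that $\lambda:=b/(u+u^2)$ satisfies $1-\gamma\lambda=1/(1+u)$, hence $f'(\lambda)=(1+u)^2-2=-(1-2u-u^2)$. Defining $\kappa:=1-2u-u^2$ then gives $\kappa+f'(\lambda)=0$, and a short calculation (using convexity of $f'$ and the fact that $\lambda$ is the unique minimizer of $t\mapsto(f(t)+\kappa t)$ with $\lim_{t\to\lambda_-}(f+\kappa t)=0$, as guaranteed by Remark~\ref{pr:new}) confirms $\kappa=\sup_{0<t<R}-f(t)/t$. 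The formula $\tilde\theta=\kappa/(2-\kappa)=(1-2\sqrt{\gamma b}-\gamma b)/(1+2\sqrt{\gamma b}+\gamma b)$ follows.

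Finally, the hypothesis $\|T_{x_0}^{-1}[-F(x_0)]\|\le b=f(0)$ is exactly \eqref{KH}, and the admissible range $0\le\theta\le\tilde\theta$ matches the theorem's assumption. Applying Theorem~\ref{th:knt1} with $\rho=0$ and $z_0=x_0$ yields well-definedness, the bound $\|T_{x_0}^{-1}[-F(x_k)]\|\le[(1+\theta^2)/2]^k b$, containment of $\{x_k\}$ in $B(x_0,\lambda)$, convergence to some $x_*\in B[x_0,\lambda]$ with $F(x_*)\in C$, the rate estimate \eqref{eq:slc} (which becomes the displayed inequality in the statement after substituting the formulas for $f'$ at $\lambda$), and under the sharper bound on $\theta$ the $Q$-linear rate \eqref{eq:lc}, where $2/\kappa=2/(1-2\sqrt{\gamma b}-\gamma b)$ supplies the second term. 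The numerical threshold on $\theta$ displayed in the theorem is obtained by substituting $\kappa=1-2\sqrt{\gamma b}-\gamma b$ into the threshold $[-2(\kappa+1)+\sqrt{4(\kappa+1)^2+\kappa(4+\kappa)}]/(4+\kappa)$ of Theorem~\ref{th:knt1} and simplifying.
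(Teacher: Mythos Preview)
Your proposal is correct and follows essentially the same route as the paper: exhibit the Smale majorant $f(t)=t/(1-\gamma t)-2t+b$, verify it is a majorant function for $F$ at $x_0$, identify $\kappa=1-2\sqrt{\gamma b}-\gamma b$, $\lambda=b/(\sqrt{\gamma b}+\gamma b)$ and $\tilde\theta=\kappa/(2-\kappa)$, and invoke Theorem~\ref{th:knt1} with $\rho=0$. The paper's proof is a two-line citation (``use Lemma~20 of \cite{Ferreira2015}\ldots all results follow from Theorem~\ref{th:knt1}''), so you have simply unpacked what that lemma contains---the verification of \textbf{h1}--\textbf{h4} and of the majorant inequality~\eqref{eq:MCAI} via the Taylor expansion and the Smale bound $\|T_{x_0}^{-1}F^{(n)}(x_0)/n!\|\le\gamma^{n-1}$---and carried out the explicit computation of $\kappa$, $\lambda$, $\tilde\theta$ that the paper leaves implicit.
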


\begin{proof}
Take $\tilde{\theta}=(1-2\sqrt{\gamma b}-\gamma b)/(1+2\sqrt{\gamma b}+\gamma b)$. Use Lemma 20 of \cite{Ferreira2015}  to prove that $f:[0,1/\gamma) \to \mathbb{R}$  defined by   $ f(t)=t/(1-\gamma t)-2t+b,$ is a majorant function for $F$ in $x_0$, see \cite{FerreiraSvaiter2009}. Therefore, 
  all results follow from Theorem~\ref{th:knt1}, applied to this
  particular context.
\end{proof} 
\begin{remark}
In Theorem~\ref{th:kngesrs},  if $\theta=0$ and $C=\{0\}$ then  we obtain,  in the setting of Smale's $\alpha$-theory,   \cite[Theorem 21 ]{Ferreira2015}  for the  exact Newton method and  \cite[Theorem 6.1]{FerreiraSvaiter2012} for the inexact Newton method,  respectively.
\end{remark}
\section{Final remarks} \label{sec:fr}
In this paper we have established a semi-local convergence analysis for inexact Newton's method for cone inclusion problem  under affine invariant majorant condition. Following the same idea of this paper, as future works,  we propose to study the  exact and inexact Newton's method to the problem
\begin{equation}\label{eq:ge}
F(x)+C(x)\ni 0,
\end{equation}
described,  respectively,  by
$$
F(x_k) +F'(x_k)(x_{k+1}-x_k)+C(x_{k+1}) \ni 0 \qquad \;\;k=0,1, \ldots
$$
and
$$
(F(x_k) +F'(x_k)(x_{k+1}-x_k)+C(x_{k+1}))\cap R_{k}(x_k, x_{k+1}) \neq \varnothing, \qquad \;\;k=0,1, \ldots , 
$$ 
where $C: \banacha \times \banacha \rightrightarrows \banachb$ is now a  set-value mapping and   $R_{k}: \banacha \times \banacha \rightrightarrows \banachb$ is a sequence of set-value mappings with closed graphs. The  problem \eqref{eq:ge} is a generalization to problem \eqref{eq:ipi}, which is called generalized equations, and it has been the subject of many new research, see \cite{Dontchev2015, DontchevRockafellar2009, DontchevRockafellar2010, DontchevRockafellar2013, PietrusJean2013}. Furthermore, it will be interesting   to study these two above methods under a majorant condition.  

\end{document}